\documentclass[11pt]{article} 

\usepackage[utf8]{inputenc} 

\usepackage{geometry} 
\usepackage{graphicx} 

\usepackage{booktabs} 
\usepackage{array} 
\usepackage{paralist} 
\usepackage{verbatim} 
\usepackage{subfig} 

\usepackage{fancyhdr} 
\usepackage{sectsty}
\allsectionsfont{\sffamily\mdseries\upshape} 

\usepackage[nottoc,notlof,notlot]{tocbibind} 
\usepackage[titles,subfigure]{tocloft} 

\usepackage{amsmath, amssymb, bm}
\usepackage[T1]{fontenc}
\usepackage{textcomp}
\usepackage{mathrsfs}

\newtheorem{theorem}{Theorem}[section]
\newtheorem{lemma}[theorem]{Lemma}
\newtheorem{proposition}[theorem]{Proposition}

\numberwithin{equation}{section} 

\title{Jucys--Murphy Elements for Wreath Products and \\
Their Application to Dynamical Random Multi-Diagrams
\thanks{
This work was supported by JSPS KAKENHI Grant Number JP22K03346} 
\thanks{Keywords: Jucys--Murphy element, wreath product, branching rule, 
random multi-diagram, 
dynamical limit shape, free cumulant, approximate factorization property.}}
\author{{Akihito HORA}
\thanks{Department of Mathematics, Faculty of Science, Hokkaido University, Sapporo 
060-0810, Japan; hora@math.sci.hokudai.ac.jp}}

\begin{document}
\maketitle

\begin{abstract}
The equivalence classes of irreducible representations of wreath product 
$\mathfrak{S}_n(T) = T^n \rtimes \mathfrak{S}_n$ of finite group $T$ with respect 
to symmetric group $\mathfrak{S}_n$ are parametrized by 
$\mathbb{Y}_n(\widehat{T})$, the $\lvert \widehat{T}\rvert$-tuple Young diagrams 
with total size $n$. 
We show a formula connecting the Kerov transition measures of these Young diagrams 
with the Jucys--Murphy elements of $\mathfrak{S}_n(T)$. 
This formula is due to Biane in the case of symmetric groups. 
The formula enables us to investigate asymptotic property of the shapes of 
multi-diagrams through combinatorial analysis for the Jucys--Murphy elements. 
On the other hand, a Markov chain is introduced on $\mathbb{Y}_n(\widehat{T})$, 
canonically reflecting the branching rule for the tower of wreath product groups. 
We have a continuous time stochastic process on $\mathbb{Y}_n(\widehat{T})$ 
from this chain by replacing the discrete time by a counting process. 
Our project is to specify the deterministic limit shape of multi-diagrams at each 
macroscopic time through appropriate space-time scaling limit, and to describe 
evolution of related quantities characterizing the shape. 
Especially, we derive dynamical concentrated limit shapes in the case of abelian $T$ 
by using free probability tools under the assumption of approximate factorization property 
for initial ensembles with an additional property of a pausing time distribution.
\end{abstract}

\section{Introduction}
Jucys--Murphy elements (\textit{JM elements}, for short) of the symmetric group $\mathfrak{S}_n$ 
\begin{equation}\label{eq:1-1}
J_1 := 0, \qquad J_k := (1\ k)+(2\ k) + \cdots + (k\!-\!1\ k) \in \mathbb{C}[\mathfrak{S}_n] 
\quad (2\leqq k\leqq n) 
\end{equation}
appear in a wide variety of contexts related to representations of symmetric groups, 
including fundamental textbooks. 
A series of seminal works due to Biane reveal that JM elements play an 
important role in interplay between asymptotic representation theory of symmetric groups and 
free probability theory (\cite{Bia95}, \cite{Bia98}, \cite{Bia01}, \cite{Bia03}). 
In relation to the limit shapes of Young diagrams (\textit{diagrams}, for short), 
Biane's formula connecting JM elements with Kerov's transition measure is crucial. 
Let us quickly recall the formula. 
See Table~\ref{tab:1-1} for some notations on diagrams used in the present paper. 

\begin{table}[htb]
\begin{tabular}{ll} \toprule 
the set of diagrams & $\mathbb{Y}$ \\ 
the set of diagrams of size $n$ & $\mathbb{Y}_n$ \\ 
size (weight) of diagram $\nu$ & $|\nu|$ \\ 
number of parts (rows) of diagram $\nu$ & $l(\nu)$ \\ 
number of parts of length $j$ of diagram $\nu$ & $m_j(\nu)$ \\ 
adding or removing a box of diagrams & 
$\nu\nearrow\mu$ or $\mu\searrow\nu$ \\ \bottomrule 
\end{tabular}
\caption{Notations for Young diagrams. e.g. 
$(1^12^23^1)\in\mathbb{Y}_8 \nearrow (1^12^13^2)\in\mathbb{Y}_9$}
\label{tab:1-1}
\end{table}

\begin{figure}[hbtp]
\centering
{\unitlength 0.1in%
\begin{picture}(14.7000,6.3000)(1.3000,-10.3000)%
%
\special{pn 8}%
\special{pa 400 400}%
\special{pa 1000 1000}%
\special{fp}%
\special{pa 1000 1000}%
\special{pa 1600 400}%
\special{fp}%
\special{pa 1600 1000}%
\special{pa 400 1000}%
\special{fp}%
\special{pa 460 460}%
\special{pa 520 400}%
\special{fp}%
\special{pa 520 400}%
\special{pa 640 520}%
\special{fp}%
\special{pa 640 520}%
\special{pa 700 460}%
\special{fp}%
\special{pa 700 460}%
\special{pa 760 520}%
\special{fp}%
\special{pa 760 520}%
\special{pa 820 460}%
\special{fp}%
\special{pa 820 460}%
\special{pa 940 580}%
\special{fp}%
\special{pa 940 580}%
\special{pa 1000 520}%
\special{fp}%
\special{pa 1000 520}%
\special{pa 1060 580}%
\special{fp}%
\special{pa 1060 580}%
\special{pa 1180 460}%
\special{fp}%
\special{pa 1180 460}%
\special{pa 1300 580}%
\special{fp}%
\special{pa 1300 580}%
\special{pa 1360 520}%
\special{fp}%
\special{pa 1360 520}%
\special{pa 1420 580}%
\special{fp}%
%
\special{pn 8}%
\special{pa 1420 580}%
\special{pa 1420 1000}%
\special{dt 0.045}%
\special{pa 1360 1000}%
\special{pa 1360 520}%
\special{dt 0.045}%
\special{pa 460 460}%
\special{pa 460 1000}%
\special{dt 0.045}%
\special{pa 520 400}%
\special{pa 520 1000}%
\special{dt 0.045}%
\special{pa 640 1000}%
\special{pa 640 520}%
\special{dt 0.045}%
\put(4.3000,-10.2000){\makebox(0,0)[rt]{$x_1$}}%
\put(4.9000,-10.2000){\makebox(0,0)[lt]{$y_1$}}%
\put(6.6000,-10.3000){\makebox(0,0)[lt]{$x_2$}}%
\put(13.6000,-10.3000){\makebox(0,0)[rt]{$y_{r-1}$}}%
\put(14.2000,-10.3000){\makebox(0,0)[lt]{$x_r$}}%
\end{picture}}%
\caption{Coordinates of a rectanglular diagram}
\label{fig:1-1}
\end{figure}
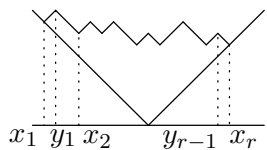

Diagram $\lambda$ is displayed in the $xy$ plane as Figure~\ref{fig:1-1}. 
Its profile is identified with the graph $y= \lambda(x)$, where 
the coordinates of valleys $x_i$ and peaks $y_i$ belong to $\mathbb{Z}$ and satisfy 
\begin{equation}\label{eq:1-3}
x_1< y_1< x_2< \cdots < y_{r-1} < x_r, \qquad \sum_i x_i = \sum_i y_i.
\end{equation}
We then have 
\[
\int_\mathbb{R} ( \lambda(x) - \lvert x\rvert) dx = 2 \lvert \lambda\rvert.
\]

Regardless of being integers, we consider a rectangular diagram which has valleys and 
peaks satisfying \eqref{eq:1-3}. 
More generally, an element of 
\begin{multline*}
\mathscr{D} = \bigl\{ \omega : \mathbb{R}\longrightarrow \mathbb{R} \,\big\vert\, 
\lvert \omega(x)-\omega(y)\rvert \leqq \lvert x-y\rvert, \\ 
\int_{-\infty}^{-1}(1+\omega^\prime (x)) \frac{dx}{\lvert x\rvert} <\infty, \; 
\int_1^\infty (1-\omega^\prime (x)) \frac{dx}{x} <\infty\bigr\}
\end{multline*}
is called a continuous diagram. 
A probability on $\mathbb{R}$ is assigned to $\omega\in\mathscr{D}$, 
denoted by $\mathfrak{m}_\omega$, and called Kerov's transition measure of $\omega$ 
(\cite{Ker03}). 
If $\lambda$ is a rectangular diagram satisfying \eqref{eq:1-3}, $\mathfrak{m}_\lambda$ 
is an atomic probability supported by $\{x_1, \cdots, x_r\}$ and characterized by 
\[
\frac{(z-y_1)\cdots (z-y_{r-1})}{(z-x_1)\cdots (z-x_r)} = 
\sum_{i=1}^r \frac{\mathfrak{m_\lambda}(\{x_i\})}{z-x_i}.
\] 
Furthermore, if $\lambda\in\mathbb{Y}$, then 
$\mathrm{supp}\,\mathfrak{m}_\lambda = \{ c(\mu\setminus\lambda) \,\vert\, 
\lambda\nearrow\mu \}$ 
by using content $c$ of a box. 
The $k$th moment of probability $\mathfrak{m}$ on $\mathbb{R}$ is denoted by 
$M_k(\mathfrak{m})$ if it exists. 
The moment sequence $\{ M_k(\mathfrak{m}_\omega)\}_{k\in\mathbb{N}}$ 
determines $\omega\in\mathscr{D}$ if the moment problem is determinate. 

Let $\chi^\lambda$ denote the irreducible character of $\mathfrak{S}_n$ 
corresponding to $\lambda\in\mathbb{Y}_n$. 
Its value at $x\in\mathfrak{S}_n$ is denoted by $\chi^\lambda(x)$ or $\chi^\lambda_C$ 
when $x$ belongs to conjugacy class $C$ of $\mathfrak{S}_n$. 
For conjugacy class $C_\sigma$ corresponding to $\sigma\in\mathbb{Y}_n$, we 
simply write as $\chi^\lambda_\sigma$. 
Naturally, $\chi^\lambda$ is linearly extended onto $\mathbb{C}[\mathfrak{S}_n]$. 
Define restriction map 
$\mathrm{E} : \mathbb{C}[\mathfrak{S}_{n+1}] \longrightarrow \mathbb{C}[\mathfrak{S}_n]$ 
by 
\begin{equation}\label{eq:1-6}
\mathrm{E}[x] := \begin{cases} x, & x\in\mathfrak{S}_n \\ 0, & 
x\in\mathfrak{S}_{n+1}\setminus\mathfrak{S}_n \end{cases}
\end{equation}
with linear extension. 
Biane's formula takes the form of 
(see \cite{Bia03}, \cite{Hor05}, \cite{Hor06}) 
\begin{equation}\label{eq:1-7}
\frac{\chi^\lambda(\mathrm{E}[ J_{n+1}^{\ k}])}{\dim\lambda} 
= M_k (\mathfrak{m}_\lambda) \quad 
\Bigl( = \sum_{\mu: \lambda\nearrow\mu} c(\mu\setminus\lambda)^k 
\mathfrak{m}_\lambda(\{c(\mu\setminus\lambda)\}) \Bigr), \qquad 
\lambda\in\mathbb{Y}_n, \ k\in\mathbb{N}.
\end{equation}

JM elements and Biane's formula appear also in \cite{KvLiMi19} related to free probability. 
JM elements admit a spin version (\cite{Kle05}). 
In \cite{Hor24}, we showed a spin version of \eqref{eq:1-7} and applied it to spin 
limit shape problems for symmetric groups. 

It is natural to consider extension from a symmetric group to wreath product 
\[ 
\mathfrak{S}_n(T) = T \wr \mathfrak{S}_n = T^n \rtimes \mathfrak{S}_n
\] 
where $T$ is a finite group. 
The set of conjugacy classes of $T$ is denoted by $[T]$. 
An element of $\mathfrak{S}_n(T)$ is decomposed into product of disjoint 
cycles of $\mathfrak{S}_n$, e.g. $(i_1\ i_2\ \cdots\ i_p)$, with elements of $T$, 
e.g. $(t_{i_1}, t_{i_2}, \cdots, t_{i_p})$, supported by them. 
Such cycles are conjugate in $\mathfrak{S}_n(T)$ if the cycle lengths agree and 
the products $t_{i_p}\cdots t_{i_1}$ belong to the same conjugacy class of $T$. 
This implies that the conjugacy classes $[\mathfrak{S}_n(T)]$ are parametrized by 
\[ 
\mathbb{Y}_n([T]) := \bigl\{ \rho = (\rho_\theta)_{\theta\in [T]} \,\big\vert\, 
\rho_\theta\in\mathbb{Y}, \ \sum_{\theta\in [T]} \lvert\rho_\theta\rvert =n 
\bigr\}.
\] 
If $g$ belongs to conjugacy class $C_\rho$ labeled by $\rho\in\mathbb{Y}_n([T])$, 
then $\rho$ is referred to as the (conjugacy) type of $g$. 
On the other hand, we see the equivalence classes of irreducible representations 
$\mathfrak{S}_n(T)^\wedge$ are parametrized by 
\[ 
\mathbb{Y}_n(\widehat{T}) := \bigl\{ \lambda = (\lambda^\zeta)_{\zeta\in \widehat{T}} 
\,\big\vert\, \lambda^\zeta\in\mathbb{Y}, \ \sum_{\zeta\in \widehat{T}} 
\lvert\lambda^\zeta\rvert =n \bigr\}
\] 
by the construction using induction from its subgroups. 

Similarly to \eqref{eq:1-1}, a JM element of a wreath product is defined as 
difference of two conjugacy class indicators of $2$-cycles. 
However, $\mathfrak{S}_n(T)$ has $\lvert [T]\rvert$ numbers of $2$-cycle conjugacy classes. 
In the so-called dual approach of asymptotic representation theory, we often discuss 
behavior of some quantities by letting the type of a conjugacy class be fixed and the size of 
a group grow infinitely. 
It is then convenient to consider an inclusion map 
$\iota_{n,k} : \mathbb{Y}_k([T]) \longrightarrow \mathbb{Y}_n([T])$ for 
$k, n\in\mathbb{N}$, $k<n$, where 
\[ 
\iota_{n,k}\rho = (\Tilde{\rho}_\theta)_{\theta\in [T]} \in \mathbb{Y}_n([T]), 
\qquad \Tilde{\rho}_\theta := \begin{cases} \rho_{\{e_T\}} \sqcup (1^{n-k}), & 
\theta = \{e_T\} \\ \rho_\theta, & \theta\neq\{e_T\} \end{cases}
\] 
for $\rho = (\rho_\theta)_{\theta\in [T]} \in \mathbb{Y}_k([T])$. 
Let $(2)_\theta$ denote the element of $\mathbb{Y}_2([T])$ whose $\theta$-entry is 
cycle $(2)$ with the other entries $\varnothing$. 
The conjugacy class of $\mathfrak{S}_n(T)$ having 
$\iota_{n,2}(2)_\theta \in\mathbb{Y}_n([T])$ as its type, i.e. the set of 
$(\theta, 2)$-cycles, is 
\[ 
C_{\iota_{n,2}(2)_\theta} = \{ (y^{-1}, (i)) (xy, (j)) (i\ j) \,\vert\, 
1\leqq i<j\leqq n, \ y\in T, \ x\in C_\theta\}.
\] 
Here $C_\theta$ is the conjugacy class of $T$ labeled by $\theta$, and $(y^{-1}, (i))$ 
denotes the element of $T^n$ whose $i$th entry is $y^{-1}$ with the other entries $e_T$. 
Let $A_C$ denote the indicator of conjugacy class $C$ of a group
\footnote{Notation used throughout the present paper.}: 
$A_C := \sum_{g\in C} g$. 
For $\theta\in [T]$ and $n\in\mathbb{N}$, we define JM element 
$J_{n+1}^\theta$ of wreath product $\mathfrak{S}_{n+1}(T)$ as 
the difference of $(\theta, 2)$-cycles of 
$\mathfrak{S}_{n+1}(T)$ and those of $\mathfrak{S}_n(T)$ in 
$\mathbb{C}[\mathfrak{S}_{n+1}([T])]$: 
\begin{equation}\label{eq:1-13}
J_{n+1}^\theta := A_{\iota_{n+1,2}(2)_\theta} - A_{\iota_{n,2}(2)_\theta} 
= \sum_{i=1}^n \sum_{x\in C_\theta} \sum_{y\in T} (y^{-1}, (i)) (xy, (n\!+\!1)) (i\ n\!+\!1). 
\end{equation}
When $\theta = \{e_T\}$, $J_{n+1}^{\{e_T\}}$ is simply denoted by $J_{n+1}$. 
We see straightforwardly 
\[ 
J_{n+1}^\theta = J_{n+1} (A_\theta, (n\!+\!1)) = (A_\theta, (n\!+\!1)) J_{n+1}
\] 
where $A_\theta := A_{C_\theta} = \sum_{x\in C_\theta} x$, so 
$(A_\theta, (n\!+\!1)) = \sum_{x\in C_\theta} (x, (n\!+\!1))$. 

The following result is one of the theorems in this paper, which extends Biane's formula 
\eqref{eq:1-7} to wreath products. 
Under the inclusion $\mathfrak{S}_n(T) \subset \mathfrak{S}_{n+1}(T)$ through 
$T^n \cong T^n\times \{e_T\} \subset T^{n+1}$, the restriction map 
$\mathrm{E} : \mathbb{C}[\mathfrak{S}_{n+1}(T)] \longrightarrow 
\mathbb{C}[\mathfrak{S}_n(T)]$ is defined similarly to \eqref{eq:1-6}. 
The irreducible character of $\mathfrak{S}_n(T)$ corresponding to 
$\lambda\in\mathbb{Y}_n(\widehat{T})$ is denoted by $\chi^\lambda$. 
The character table of $T$ is denoted by $(\chi^\zeta_\theta)_{\zeta, \theta}$. 

\begin{theorem}\label{th:1-1}
For $\lambda = (\lambda^\zeta)_{\zeta\in \widehat{T}} \in\mathbb{Y}_n(\widehat{T})$ 
and $k\in\mathbb{N}$, we have 
\begin{equation}\label{eq:1-15}
M_k(\mathfrak{m}_{\lambda^\zeta}) = \frac{(\dim\zeta)^{k-1}}{\lvert T\rvert^k} 
\sum_{\theta\in [T]} \frac{\chi^\lambda ( \mathrm{E}[ J_{n+1}^{\ k} (A_\theta, (n\!+\!1))])}
{\dim\lambda} \, \overline{\chi}^\zeta_\theta, \qquad \zeta\in \widehat{T}, 
\end{equation}
or equivalently (by inversion) 
\begin{equation}\label{eq:1-16}
\frac{\chi^\lambda ( \mathrm{E}[ J_{n+1}^{\ k} (A_\theta, (n\!+\!1))])}{\dim\lambda} 
= \lvert T\rvert^{k-1} \lvert C_\theta\rvert 
\sum_{\zeta\in\widehat{T}} \frac{M_k(\mathfrak{m}_{\lambda^\zeta}) \chi^\zeta_\theta}
{(\dim\zeta)^{k-1}}, \qquad \theta\in [T].
\end{equation}
\end{theorem}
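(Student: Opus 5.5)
The plan is to realize the left-hand side of \eqref{eq:1-16} as a compressed trace in the induced representation $\mathrm{Ind}_{\mathfrak{S}_n(T)}^{\mathfrak{S}_{n+1}(T)} V_\lambda$, compute that trace spectrally, and then obtain \eqref{eq:1-15} from \eqref{eq:1-16} by orthogonality of the character table of $T$. Write $G_n=\mathfrak{S}_n(T)$. For $a\in\mathbb{C}[G_{n+1}]$ we have $\chi^\lambda(\mathrm{E}[a])=\mathrm{Tr}_{V_\lambda}(P a P)$, where $\mathrm{Ind}\,V_\lambda=\mathbb{C}[G_{n+1}]\otimes_{\mathbb{C}[G_n]}V_\lambda$ and $P$ is the projection onto the subspace $e\otimes V_\lambda$. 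We apply this to $a=J_{n+1}^{\ k}(A_\theta,(n\!+\!1))$, which commutes with $\mathbb{C}[G_n]$.

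The first ingredient is the branching rule for $G_n\times T\subset G_{n+1}$:
\[
\mathrm{Res}\,V_\mu=\bigoplus_{\zeta}\ \bigoplus_{\lambda^\zeta\nearrow\mu^\zeta} V_\lambda\otimes\zeta,
\]
where $\mu$ ranges over multi-diagrams obtained from $\lambda$ by adding one box in a single coordinate $\zeta$. By Frobenius reciprocity, $V_\mu$ then occurs in $\mathrm{Ind}\,V_\lambda$ with multiplicity $\dim\zeta$.

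The second ingredient is the spectral action on the component $V_\lambda\otimes\zeta\subset V_\mu$.

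\emph{Action of $(A_\theta,(n\!+\!1))$.} This element is central in $\mathbb{C}[T]$ for the last coordinate, so it acts by the scalar $\lvert C_\theta\rvert\chi^\zeta_\theta/\dim\zeta$.

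\emph{Action of $J_{n+1}$.} Since $J_{n+1}$ is a difference of class sums of $G_{n+1}$ and $G_n$, it commutes with $G_n\times T$. I expect it to act by the scalar $\lvert T\rvert\,c(\mu^\zeta\setminus\lambda^\zeta)/\dim\zeta$. I would verify this by computing the central character of the $(\{e_T\},2)$-cycle class sum on $V_\mu$, for instance through the induced-module construction of $V_\mu$ (a content sum inside each coordinate, scaled by $\lvert T\rvert/\dim\zeta$), and taking the difference. Fixing this normalization is the main obstacle, and I would check it first for $n=0$ and for abelian $T$.

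Given these two ingredients, $P a P$ is a sum over the pairs $(\mu,\zeta)$ of these scalars times the trace of $P$ restricted to the relevant isotypic piece. That trace equals
\[
\frac{\dim\lambda\cdot\dim\mu}{[G_{n+1}:G_n]\dim\lambda}\,\dim\zeta ,
\]
suitably normalized, since $P$ restricted to each copy of $V_\mu$ has trace proportional to $\dim\mu$ and $[G_{n+1}:G_n]=(n+1)\lvert T\rvert$. Next, use the dimension formula
\[
\dim\lambda=\frac{n!}{\prod_\zeta\lvert\lambda^\zeta\rvert!}\prod_\zeta(\dim\zeta)^{\lvert\lambda^\zeta\rvert}\dim\lambda^\zeta
\]
together with the symmetric-group identity $\dim\mu^\zeta/\dim\lambda^\zeta=(\lvert\lambda^\zeta\rvert+1)\,\mathfrak{m}_{\lambda^\zeta}(\{c\})$, which is Kerov's transition probability and the content behind \eqref{eq:1-7}. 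These give
\[
\frac{\dim\mu}{\dim\lambda}=(n+1)\dim\zeta\,\mathfrak{m}_{\lambda^\zeta}(\{c\}).
\]
Collecting factors should yield
\[
\frac{\chi^\lambda(\mathrm{E}[a])}{\dim\lambda}=\sum_\zeta\sum_c \mathfrak{m}_{\lambda^\zeta}(\{c\})\Bigl(\frac{\lvert T\rvert c}{\dim\zeta}\Bigr)^k\frac{\lvert C_\theta\rvert\chi^\zeta_\theta}{\dim\zeta}\cdot\frac{(\dim\zeta)^2}{\lvert T\rvert},
\]
which is exactly \eqref{eq:1-16}. The bookkeeping of the powers of $\dim\zeta$ and $\lvert T\rvert$ here is the second place where care is needed.

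Finally, to pass from \eqref{eq:1-16} to \eqref{eq:1-15}, multiply \eqref{eq:1-16} by $\overline{\chi}^{\zeta'}_\theta$, sum over $\theta\in[T]$, and apply the orthogonality relation $\sum_\theta\lvert C_\theta\rvert\chi^\zeta_\theta\overline{\chi}^{\zeta'}_\theta=\lvert T\rvert\delta_{\zeta\zeta'}$. This produces \eqref{eq:1-15}. Conversely, \eqref{eq:1-15} implies \eqref{eq:1-16} by the column orthogonality relation.
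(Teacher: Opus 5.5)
Your proposal is correct and follows essentially the paper's route: your compressed trace in $\mathrm{Ind}\,V_\lambda$ is the identity \eqref{eq:2-1}, and the scalar you anticipate, $\lvert T\rvert\, c(\mu^\zeta\setminus\lambda^\zeta)/\dim\zeta$, is exactly what the paper's Step~2 obtains as a difference of central characters of $(\theta,2)$-class sums. That computation uses $\chi^\lambda_{\iota_{n,2}(2)_\theta}/\dim\lambda=\frac{1}{n(n-1)}\sum_\zeta\frac{\chi^\zeta_\theta}{(\dim\zeta)^2}\Sigma_2(\lambda^\zeta)$, $\lvert C_{\iota_{n,2}(2)_\theta}\rvert=\frac{n(n-1)}{2}\lvert T\rvert\lvert C_\theta\rvert$ and $\Sigma_2(\tilde\nu)-\Sigma_2(\nu)=2c(\tilde\nu\setminus\nu)$; your factorization through $\mathfrak{S}_n(T)\times T$ needs only the case $\theta=\{e_T\}$, and your remaining bookkeeping and the orthogonality inversion match \eqref{eq:2-10}--\eqref{eq:2-12}.
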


Proof of Theorem~\ref{th:1-1} is given in \S2. 
In the case of $T = \mathbb{Z}_2$, a formula corresponding to \eqref{eq:1-16} was 
reported in \cite{Hor05a}. 

We are interested in asymptotic estimate of 
$\mathrm{E}[ J_{n+1}^{\ k} (A_\theta, (n\!+\!1))]$ in \eqref{eq:1-15} and \eqref{eq:1-16}, 
especially by a combinatorial method. 
Noting $\mathrm{E}[ J_{n+1}^{\ k} (A_\theta, (n\!+\!1))]$ belongs to the center of 
$\mathbb{C}[\mathfrak{S}_n(T)]$ and is expressed as a linear combination of conjugacy class 
indicators, we want to find asymptotic behavior of its coefficients as $n\to\infty$. 
We assemble some (non-standard) notations in Table~\ref{tab:1-1.5}. 

\begin{table}[htb]
\begin{tabular}{l} \toprule 
$\overline{\mathbb{Y}} := \{\sigma\in \mathbb{Y} \,\vert\, m_1(\sigma) =0\}$, \qquad 
$\overline{\mathbb{Y}}_k := \{ \sigma\in \overline{\mathbb{Y}} \,\vert\, |\sigma|=k\}$,\\ 
$\sigma^\times \in \mathbb{Y}$ : remove the first column of $\sigma\in \overline{\mathbb{Y}}$ 
\quad i.e. \  $m_j(\sigma) = m_{j-1}(\sigma^\times)$ for $j\geqq 2$, \\ 
$\mathrm{NC}(k)$ : the set of non-crossing partitions of $\{ 1,2,\dots, k\}$, \\ 
$\mathrm{NC}(\sigma) := \{ \pi\in\mathrm{NC}(k) \,\vert\, \text{type of blocks of }\pi 
\text{ is }\sigma\}$ \qquad ($\sigma\in\mathbb{Y}_k$), \\ 
multinomial coefficient : 
$\binom{m}{(n^\alpha)_{\alpha \in A}} := \frac{m!}{\prod_{\alpha\in A} n^\alpha !}$ 
\ where \ $\sum_{\alpha\in A} n^\alpha = m$ \ ($\lvert A\rvert <\infty$) \\ 
\bottomrule 
\end{tabular}
\caption{Some notations}
\label{tab:1-1.5}
\end{table}

\begin{figure}[hbtp]
\centering
{\unitlength 0.1in%
\begin{picture}(18.3000,4.0500)(5.7000,-8.0500)%
%
\special{pn 8}%
\special{pa 600 800}%
\special{pa 600 400}%
\special{fp}%
\special{pa 800 800}%
\special{pa 800 600}%
\special{fp}%
\special{pa 1000 800}%
\special{pa 1000 600}%
\special{fp}%
\special{pa 1200 600}%
\special{pa 1200 800}%
\special{fp}%
\special{pa 1400 800}%
\special{pa 1400 600}%
\special{fp}%
\special{pa 1400 600}%
\special{pa 800 600}%
\special{fp}%
\special{pa 1600 800}%
\special{pa 1600 400}%
\special{fp}%
\special{pa 1800 800}%
\special{pa 1800 600}%
\special{fp}%
\special{pa 2000 600}%
\special{pa 2000 800}%
\special{fp}%
\special{pa 2000 600}%
\special{pa 1800 600}%
\special{fp}%
\special{pa 2200 800}%
\special{pa 2200 400}%
\special{fp}%
\special{pa 2400 400}%
\special{pa 2400 800}%
\special{fp}%
\special{pa 2400 400}%
\special{pa 600 400}%
\special{fp}%
\put(6.0000,-8.7000){\makebox(0,0){1}}%
\put(8.0000,-8.7000){\makebox(0,0){2}}%
\put(10.0000,-8.7000){\makebox(0,0){3}}%
\put(12.0000,-8.7000){\makebox(0,0){4}}%
\put(14.0000,-8.7000){\makebox(0,0){5}}%
\put(16.0000,-8.7000){\makebox(0,0){6}}%
\put(18.0000,-8.7000){\makebox(0,0){7}}%
\put(20.0000,-8.7000){\makebox(0,0){8}}%
\put(22.0000,-8.7000){\makebox(0,0){9}}%
\put(24.0000,-8.7000){\makebox(0,0){10}}%
\end{picture}}%
\caption{$\pi = \{ \{ 1,6,9,10\}, \{2,3,4,5\}, \{7,8\}\} \in 
\mathrm{NC}((2^14^2)) \subset \mathrm{NC}(10)$}
\label{fig:1-2}
\end{figure}

\begin{theorem}\label{th:1-2}
Let $T$ be abelian
\footnote{We have not succeeded yet in obtaining subsequent theorems for non-abelian $T$.}. 
Set $T^\circ := T\setminus \{e_T\}$. 
For $y\in T$ and $k\in\mathbb{N}$, we have 
\begin{align}
\mathrm{E}[ J_{n+1}^{\ k} (y, (n+1))] = &\sum_{\sigma\in\overline{\mathbb{Y}}_k} 
\Bigl\{ \sum_{(n^x_1, n^x_2, \dots)_{x\in T^\circ} : (\star)_y} 
\lvert \mathrm{NC}(\sigma)\rvert n^{k-l(\sigma)} \bigl( 1+ O(n^{-1})\bigr) 
\lvert T\rvert^{k-l(\sigma)} 
\notag \\ 
&\cdot\binom{m_1(\sigma^\times)}{(n^x_1)_{x\in T}}
\binom{m_2(\sigma^\times)}{(n^x_2)_{x\in T}}
\cdots \ 
\frac{A_{\iota_{n, \lvert\sigma^\times\rvert} ((1^{n^x_1} 2^{n^x_2} \cdots))_{x\in T}}}
{\lvert C_{\iota_{n, \lvert\sigma^\times\rvert} ((1^{n^x_1} 2^{n^x_2} \cdots))_{x\in T}}\rvert} 
+ \sum_{g\in N_\sigma} g\Bigr\} \notag \\ 
&\qquad\qquad\qquad\qquad\qquad\qquad\qquad\qquad\qquad (n\to\infty).
\label{eq:1-17}
\end{align}
Here $(\star)_y$, the range over which $(n^x_1, n^x_2, \dots)_{x\in T^\circ}$ 
runs, is given by
\footnote{$n^x_i$ indicates the number of rows of length $i$ in $\sigma^\times$ 
labeled by $x$. 
The rows not labeled by any $x\in T^\circ$ are regarded to be labeled by $e_T$. 
} 
\begin{equation}\label{eq:1-18}
n^x_i \in \{ 0, 1, \dots, m_i(\sigma^\times)\}, \ 
\sum_{x\in T^\circ} n^x_i \leqq m_i(\sigma^\times)
\  (i=1,2, \dots), \quad 
\text{and} \quad \prod_{x\in T^\circ} x^{n^x_1+n^x_2+\cdots } = y. 
\end{equation}
Furthermore, set 
\begin{equation}\label{eq:1-19}
n^{e_T}_i := m_i(\sigma^\times) - \sum_{x\in T^\circ} n^x_i.
\end{equation}
The extra terms $N_\sigma$ consist of some of those elements of $\mathfrak{S}_n(T)$ whose 
types have $\sigma^\times$, or maybe missing some $1$-box rows, as nontrivial cycles, and satisfy 
\begin{equation}\label{eq:1-20}
\lvert N_\sigma \rvert = O( n^{k- l(\sigma)-1}) \qquad (n\to\infty). 
\end{equation}
\end{theorem}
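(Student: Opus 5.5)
We expand $J_{n+1}^{\,k}(y,(n+1))$ as a sum of words, imitating the classical combinatorial proof of the symmetric-group analogue (Biane's estimate of $\mathrm{E}[J_{n+1}^{\,k}]$ via non-crossing partitions), while keeping track of the $T$-labels. By \eqref{eq:1-13} with $\theta=\{e_T\}$,
\[
J_{n+1}^{\,k}(y,(n+1)) = \sum_{i_1,\dots,i_k=1}^n \ \sum_{y_1,\dots,y_k\in T} \prod_{j=1}^k (y_j^{-1},(i_j))(y_j,(n+1))(i_j\ n\!+\!1)\cdot (y,(n+1)).
\]
First compute the underlying permutation. The product $(i_1\ n\!+\!1)\cdots(i_k\ n\!+\!1)$ fixes $n+1$ exactly when the pattern of the sequence $(i_1,\dots,i_k)$ forces this, and only these terms survive $\mathrm{E}$. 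Encode the sequence by the set partition $\pi$ of $\{1,\dots,k\}$ given by $j\sim j'$ iff $i_j=i_{j'}$. The standard analysis then applies. A block of size one never returns $n+1$ to itself, so only partitions with no singletons (type $\sigma\in\overline{\mathbb{Y}}_k$) contribute. The number of index choices is $n(n-1)\cdots(n-l(\sigma)+1)=n^{l(\sigma)}(1+O(n^{-1}))$.

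The cycle structure is maximal exactly when $\pi$ is non-crossing. In that case the product of transpositions has cycle type $\sigma^\times$ on the letters $\{i_j\}$: each block of size $p$ yields a $(p-1)$-cycle. Crossing partitions produce fewer or shorter nontrivial cycles (fewer moved points, or genus defect), and these go into $N_\sigma$. Counting them gives \eqref{eq:1-20}, since a crossing $\pi$ loses at least one factor of $n$ relative to the leading coefficient. This part I would take directly from the symmetric-group case, checking only that the $T$-decoration does not change the counting.

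Next, track the $T$-part. Sliding all $T^{n+1}$-factors to the left through the permutations, the $(n+1)$-coordinate of a surviving term collects the product $y\prod_j y_j\cdots$, and it must be $e_T$ for the term to lie in $\mathfrak{S}_n(T)$. The remaining coordinates distribute the $y_j^{\pm1}$ among the cycles. Because $T$ is abelian, the conjugacy label of each resulting $(p-1)$-cycle is the product of the $T$-elements attached to it. Each block has free parameters $y_j\in T$ subject to one constraint per cycle, and the free parameters account for the factor $|T|^{k-l(\sigma)}$ once we normalize.

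Summing over label assignments, let $n^x_i$ be the number of $i$-cycles labeled $x$. Each such assignment corresponds to a conjugacy class with type $\iota_{n,|\sigma^\times|}((1^{n^x_1}2^{n^x_2}\cdots))_{x\in T}$. The multinomial coefficients count which rows of $\sigma^\times$ get which label. The global constraint from the $(n+1)$-coordinate becomes $\prod_x x^{\sum_i n^x_i}=y$, which is $(\star)_y$. Symmetrization over relabeling $\{1,\dots,n\}$ (the sum over $i_j$'s) together with $T$-conjugation turns each orbit sum into the normalized class indicator $A_C/|C|$ times $n^{l(\sigma)}$. Dividing by the class size yields the net power $n^{k-l(\sigma)}$ after combining with $|C|\sim n^{|\sigma^\times|}|T|^{\dots}/\prod(\cdots)$; here one uses $|\sigma^\times|=k-l(\sigma)$.

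I expect the main obstacle to be this bookkeeping: verifying that, for non-crossing $\pi$, every labeling of the cycles by $(n^x_i)$ compatible with $(\star)_y$ is hit uniformly with exactly the stated weight. This requires checking that the map from $(y_1,\dots,y_k)$ to the cycle labels is a surjective homomorphism-like map with fibers of equal size $|T|^{\,\cdot}$. I would first try an explicit induction on the non-crossing structure: peel off an interval block, which is possible for non-crossing partitions, and verify that its contribution factorizes.
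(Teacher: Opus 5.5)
There is a genuine gap at the first combinatorial step. You encode a word by the partition $j\sim j'$ iff $i_j=i_{j'}$, but this is the wrong model for the surviving terms, and the claims you build on it fail.

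\textbf{A counterexample at $k=3$.} Take trivial $T$. The product $(i_3\ \ast)(i_2\ \ast)(i_1\ \ast)$ fixes $\ast$ only when $i_3=i_1\neq i_2$, and then it equals $(i_1\ i_2)$. So $\mathrm{E}[J_{n+1}^{\,3}]=\sum_{i\neq j}(i\ j)$ has $n(n-1)$ terms. This agrees with the paper: $\sigma=(3)$, $n^{k-l(\sigma)}=n^2$, $\sigma^\times=(2)$. The equal-letter partition of these words is $\{\{1,3\},\{2\}\}$, which has a singleton. Meanwhile your only admissible partition $\{1,2,3\}$ gives $(i\ \ast)^3$, which does not survive.

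\textbf{Why the count cannot be right in general.} Your $n^{l(\sigma)}$ index choices use only $l(\sigma)$ letters. These cannot carry a permutation of type $\sigma^\times$, which has size $k-l(\sigma)$, as soon as $\sigma$ has a part $\geqq 3$. No normalization repairs the exponent. In a central element the coefficient of $A_C/\lvert C\rvert$ is simply the number of terms landing in $C$.

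\textbf{The correct structure.} The paper reads it off from the up/down walk on the strata of $\overline{\mathbb{Y}}$, which has $l(\sigma)$ down steps and $k-l(\sigma)$ up steps.
\begin{itemize}
\item At leading order every up step introduces a new letter, so there are $k-l(\sigma)$ distinct letters.
\item Exactly $l(\sigma)$ of these letters occur twice. They sit at the first and last elements $b_1<b_p$ of the blocks $\{b_1<\cdots<b_p\}$ of some $\pi\in\mathrm{NC}(\sigma)$.
\item Every other position carries a once-used letter.
\item Each block yields the $(p-1)$-cycle $(i_{b_1}\ i_{b_2}\ \cdots\ i_{b_{p-1}})$.
\end{itemize}
This gives $\lvert\mathrm{NC}(\sigma)\rvert\, n^{k-l(\sigma)}(1+O(n^{-1}))$ index sequences.

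The set $N_\sigma$ collects the words in which some up step reuses an old letter (a ``semi-up''). There are $O(n^{k-l(\sigma)-1})$ of them. Their underlying permutations have the \emph{same} type $\sigma^\circ$ as the main terms. They are not crossing partitions with fewer or shorter cycles.

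\textbf{The $T$-bookkeeping, once the blocks are correct.} Your idea of peeling off innermost interval blocks is then essentially the paper's stratum-by-stratum computation, and it closes as follows.
\begin{itemize}
\item For a block $B$ with endpoints $p<q$, put $X_B:=x_qx_p^{-1}$.
\item The $x_j$ at non-endpoint positions telescope away.
\item The cycle of $B$ carries $X_B^{-1}\prod_C X_C$, where $C$ runs over the blocks immediately inside $B$.
\item What remains at $\ast$ is $(y\prod_{B\ \text{outermost}}X_B,(\ast))$.
\item The map $(X_B)_B\mapsto(\text{row labels})$ is triangular, hence a bijection of $T^{l(\sigma)}$.
\item The map $(x_1,\dots,x_k)\mapsto(X_B)_B$ has fibres of size $\lvert T\rvert^{k-l(\sigma)}$.
\end{itemize}
So every labelling of the $l(\sigma)$ rows of $\sigma^\times$ is hit exactly $\lvert T\rvert^{k-l(\sigma)}$ times. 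Survival under $\mathrm{E}$ telescopes to ``product of all row labels $=y$'', which is $(\star)_y$. Grouping the labellings by conjugacy class then produces the multinomial factors.
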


Since $m_i(\sigma^\times) =0$ hence $n^x_i =0$ holds for large $i$, the range \eqref{eq:1-18} 
of sum in \eqref{eq:1-17} is finite and does not depend on $n$.
Proof of Theorem~\ref{th:1-2} is given in \S3. 
Equation \eqref{eq:1-17} in the case of symmetric groups was shown in \cite[Theorem~1]{Hor06}. 

So far we exploited some properties of JM elements of wreath products. 
In the other part of this paper, we apply them to a \textit{dynamical} limit shape problem for 
random \textit{multiple} Young diagrams. 
The model treated here is the same with the continuous time random walk on 
$\mathbb{Y}_n(\widehat{T})$ formulated in \cite{Hor25}. 
Let us quickly review it. 
Given $\lambda\in \mathbb{Y}_n(\widehat{T})$ and a corresponding irreducible 
representation $\pi^\lambda$ of $\mathfrak{S}_n(T)$, consider irreducible decomposition 
of its restriction and induction: 
\begin{equation}\label{eq:1-21}
\mathrm{Ind}_{\mathfrak{S}_{n-1}(T)}^{\mathfrak{S}_n(T)} \circ 
\mathrm{Res}_{\mathfrak{S}_{n-1}(T)}^{\mathfrak{S}_n(T)} \pi^\lambda \cong 
\bigoplus_{\nu\in \mathbb{Y}_{n-1}(\widehat{T}) : \nu\nearrow\lambda} \ 
\bigoplus_{\mu\in \mathbb{Y}_n(\widehat{T}) : \nu\nearrow\mu} 
[\dim \zeta_{\nu\lambda} \dim\zeta_{\nu\mu}] \pi^\mu.
\end{equation}
Here we use notation $\nu\nearrow\lambda$ for $\nu\in\mathbb{Y}_{n-1}(\widehat{T})$ 
and $\lambda\in\mathbb{Y}_n(\widehat{T})$ to say that a box is added at one entry 
of $\nu$ to get $\lambda$,  
where $\zeta_{\nu\lambda}$ indicates the entry at which a box is added, hence 
$\nu^{\zeta_{\nu\lambda}}\nearrow\lambda^{\zeta_{\nu\lambda}}$ as 
diagrams (see Table~\ref{tab:1-1}). 
Taking dimensions of both sides in \eqref{eq:1-21} and dividing them by 
$n \lvert T\rvert \dim\lambda$, we get a stochastic matrix 
\begin{equation}\label{eq:1-22}
P = P^{(n)} = (P_{\lambda\mu})_{\lambda, \mu\in \mathbb{Y}_n(\widehat{T})}, \qquad 
P_{\lambda\mu} = \sum_{\nu\in\mathbb{Y}_{n-1}(\widehat{T}):\, 
\nu\nearrow\lambda, \nu\nearrow\mu} \dim\zeta_{\nu\lambda} \dim\zeta_{\nu\mu} \,
\frac{\dim\mu}{n \lvert T\rvert \dim\lambda}.
\end{equation}
Markov chain $(Z^{(n)}_k)_{k\in\{0,1,2,\dots\}}$ caused by the transition matrix \eqref{eq:1-22}, 
called a Res-Ind chain, on $\mathbb{Y}_n(\widehat{T})$ is symmetric with respect to 
the Plancherel measure 
\[ 
M_{\mathrm{Pl}} (\lambda) = \frac{(\dim\lambda)^2}{n! \lvert T\rvert^n}, 
\qquad \lambda\in \mathbb{Y}_n(\widehat{T}).
\] 
We turn $(Z^{(n)}_k)_k$ into a continuous time process $(X^{(n)}_s)_{s\geqq 0}$ 
by introducing pausing (or holding) time. 
Let IID pausing times obey distribution $\psi$ on $(0, \infty)$. 
This produces counting process $(N_s)_{s\geqq 0}$, where $N_0 \equiv 0$ a.s. 
Then, $(X^{(n)}_s)_{s\geqq 0}$ is defined by 
\begin{equation}\label{eq:1-24}
X^{(n)}_s := Z^{(n)}_{N_s}, \qquad s\geqq 0. 
\end{equation}
In \cite{Hor25}, considering appropriate space-time scaling limits for this process, we 
derived time evolution of \textit{averaged} limit shapes of multi-diagrams. 
In the present paper, we exploit finer estimates and obtain time evolution of 
\textit{concentrated} limit shapes of multi-diagrams (in the sense of LLN) as Theorem~\ref{th:1-3} 
though we restrict ourselves to the case of abelian $T$. 

In order to obtain concentrated properties, the notion of approximate factorization property 
is useful. 
This notion is introduced by Biane in \cite{Bia01} for symmetric groups and closely related 
to ergodicity of a probability measure. 
Let us quickly review it in the context of wreath products. 
The set of $\mathbb{C}$-valued normalized positive-definite central functions $f$ on 
$\mathfrak{S}_n (T)$ has affine bijective correspondence to the set of probabilities $M$ 
on $\mathbb{Y}_n(\widehat{T})$ through the Fourier transform 
\begin{equation}\label{eq:1-25}
f(g) = \sum_{\lambda\in\mathbb{Y}_n(\widehat{T})} 
M(\lambda) \,\frac{\chi^\lambda(g)}{\dim\lambda}, \qquad g\in \mathfrak{S}_n(T).
\end{equation}
This affine bijection is lifted to the one between the $\mathbb{C}$-valued normalized 
positive-definite central functions on $\mathfrak{S}_\infty (T)$, 
the infinite wreath product of $T$ by the infinite symmetric group, and the central probabilities 
on the path space of the branching graph for $\mathfrak{S}_\infty (T)$ where \eqref{eq:1-25} 
gives the relation between the restriction onto $\mathfrak{S}_n(T)$ and the marginal 
distribution on $\mathbb{Y}_n(\widehat{T})$. 
See \cite{HoHiHi08} and \cite{HoHi14} for some roles played by this correspondence in harmonic 
analysis. 
In particular, we have correspondence between the sets of extremal points of both sides. 
Recall that, for $g= (x_1, \cdots, x_n)\tau\in\mathfrak{S}_n(T)$ ($x_j\in T$, 
$\tau\in\mathfrak{S}_n$), $\mathrm{supp}\,g$ consisits of those $j$'s such that 
$\tau (j)\neq j$ or $x_j\neq e_T$. 
It is naturally extended to $g\in\mathfrak{S}_\infty (T)$.  
Such a function $f$ on $\mathfrak{S}_\infty (T)$ is extremal if and only if it is 
factorizable: 
\[ 
f(g h) = f(g)f(h), \qquad \text{if} \quad g, h\in \mathfrak{S}_\infty (T) \ \text{ and } 
\ \mathrm{supp}\,g \cap \mathrm{supp}\,h = \varnothing
\] 
(see \cite{HiHi05}). 
For a probability, \lq extremal\rq\ is often referred to as \lq ergodic\rq. 
Ergodicity is a typical structure which gives rise to concentration phenomena for probabilities, 
such as LLN. 
The notion of ergodicity can be weakened in various ways according to situations. 
In terms of corresponding function $f$, that means relaxiation of factorizability so that 
$f\vert_{\mathfrak{S}_n}$ factorizes approximately in some order as $n\to\infty$. 
Let $\mathrm{type}(g)\in \mathbb{Y}([T])$ denote the cycle type of 
$g\in \mathfrak{S}_\infty(T)$, where we set 
\begin{equation}\label{eq:1-26.5}
\mathbb{Y}([T]) = \bigsqcup_{k=0}^\infty \{ \rho\in \mathbb{Y}_k([T]) \,\vert\, 
m_1(\rho_{\{e_T\}}) =0\}.
\end{equation}
By definition, taking $k\in\mathbb{N}$ such that $g\in \mathfrak{S}_k(T)$ and conjugacy 
class $C_\rho$ of $\mathfrak{S}_k(T)$ ($\rho\in\mathbb{Y}_k([T])$) to which $g$ belongs, 
\textit{we get $\mathrm{type}(g)$ by removing all rows of length $1$ from the 
$\{e_T\}$-entry of $\rho$}. 
In particular, $\mathrm{type}(e) = (\varnothing, \cdots, \varnothing)$. 
The (total) size and length of $\rho = (\rho_\theta)_{\theta\in [T]}$ is denoted by 
\[ 
\lvert\rho\rvert := \sum_{\theta\in [T]} \lvert\rho_\theta \rvert, \qquad 
l(\rho) := \sum_{\theta\in [T]} l(\rho_\theta).
\] 

\medskip

\noindent\textbf{Definition} \ 
Let $\{f^{(n)}\}_{n\in\mathbb{N}}$ be a sequence of $\mathbb{C}$-valued normalized 
positive-definite central functions, each on $\mathfrak{S}_n(T)$. 
Let $M^{(n)}$ be the probability on $\mathbb{Y}_n(\widehat{T})$ related to $f^{(n)}$ 
by \eqref{eq:1-25}. 
\textit{Approximate factorization property} (AFP, for short) for $\{f^{(n)}\}_{n\in\mathbb{N}}$ 
or $\{M^{(n)}\}_{n\in\mathbb{N}}$ is defined by the following condition: 
if $\mathrm{supp}\,g \cap \mathrm{supp}\,h = \varnothing$ for 
$g, h\in \mathfrak{S}_\infty(T)$, 
\begin{equation}\label{eq:1-28}
f^{(n)}(gh) = f^{(n)}(g) f^{(n)}(h) + o\bigl( n^{-\frac{1}{2}
(\lvert\mathrm{type}(g)\rvert - l(\mathrm{type}(g)) + 
\lvert\mathrm{type}(h)\rvert - l(\mathrm{type}(h)))} \bigr) \  (n\to\infty) 
\end{equation}
holds. 
Here $f^{(n)}(gh) - f^{(n)}(g) f^{(n)}(h)$ depends only on types of $g$ and $h$ 
(rather than $g$ and $h$ themselves).
\hfill $\square$

\medskip

The above definition of AFP is probably the weakest version. 
See \cite{Sni06} for applications of AFP to central limit theorems in wreath products. 

If $g, h$ are $k, l$-cycles respectively, the $o$ terms in \eqref{eq:1-28} is 
$o(n^{-\frac{1}{2}(k-1+l-1)})$. 
Under \eqref{eq:1-31} below, $f^{(n)}(g)$ and $f^{(n)}(h)$ are of orders 
$n^{-\frac{k-1}{2}}$ and $n^{-\frac{l-1}{2}}$ respectively. 
Hence \eqref{eq:1-28} expresses factrorization with smaller error terms. 

Let us consider a space-time scaling limit for process $(X^{(n)}_s)_{s\geqq 0}$ of 
\eqref{eq:1-24}. 
In order to get a limit shape of a diagram, we take $1/\sqrt{n}$ in the spatial direction: 
for $y = \nu(x)$ in Figure~\ref{fig:1-1}, 
\begin{equation}\label{eq:1-29}
\nu^{\sqrt{n}}(x) := \frac{1}{\sqrt{n}} \,\nu (\sqrt{n} x), \qquad 
\nu\in\mathbb{Y}_n.
\end{equation}
In the temporal direction, we take square of $\sqrt{n}$, namely a diffusive rescale
\footnote{Other rescales in \cite{Hor25} are irrelevant here. 
See Remark (4) after Theorem~\ref{th:1-3}.}: 
\[ 
s = tn \qquad \text{for} \quad t\geqq 0,
\] 
where $s$ and $t$ are interpreted as microscopic and macroscopic times respectively. 
So let $M_{tn}^{(n)}$ be the distribution of $X^{(n)}_{tn}$ on $\mathbb{Y}_n(\widehat{T})$, 
and $f^{(n)}_{tn}$ the function related to $M^{(n)}_{tn}$ by \eqref{eq:1-25}. 
We show time evolution of concentration phenomenon for our model under the assumptions on 
the initial condition for $M^{(n)}_0$ or $f^{(n)}_0$ and the pausing time distribution.

\begin{theorem}\label{th:1-3}
Let $T$ be abelian
\footnote{The paring of $x\in T$ and $\zeta\in\widehat{T}$ is denoted by $\zeta(x)$.}. 
As an initial condition, assume for $\{f^{(n)}_0\}_{n\in\mathbb{N}}$ 

\noindent 
convergence of $(x, k)$-cycle values and growth orders:
\begin{align}
&\text{there exists} \quad \lim_{n\to\infty} n^{\frac{k-1}{2}} f^{(n)}_0 
( \iota_{n,k} (k)_x) =: \gamma_{k+1}^x, 
& &x\in T, \ k\in\mathbb{N}, \label{eq:1-31} \\ 
&\sup_{k\geqq 2} \frac{\lvert \gamma_k^x\rvert^{1/k}}{k} < \infty, & 
&x\in T, \label{eq:1-32}
\end{align}
and AFP \eqref{eq:1-28}. 
For pausing time distribution $\psi$, assume that $\psi$ has mean $m$ and its characteristic 
function $\varphi$ satisfies: 
\begin{equation}\label{eq:1-33}
\text{there exists} \quad a>0, \qquad \int_{\{ \lvert\xi\rvert \geqq a\}} 
\Bigl\lvert \frac{\varphi(\xi)}{\xi}
\Bigr\rvert d\xi < \infty.
\end{equation}
Then, for any $t\geqq 0$, we have concentration (LLN) for $M^{(n)}_{tn}$ as $n\to\infty$, 
that is, there exists $\omega(t) = (\omega(t)^\zeta)_{\zeta\in\widehat{T}}$ 
such that $\omega(t)^\zeta \in \mathscr{D}$ satisfies, for any \ $\epsilon >0$, 
\begin{equation}\label{eq:1-34}
\lim_{n\to\infty} M^{(n)}_{tn} \Bigl( \Bigl\{ \lambda = (\lambda^\zeta)_{\zeta\in\widehat{T}}
\in \mathbb{Y}_n(\widehat{T}) \,\Big\vert\, \max_{\zeta\in\widehat{T}} 
\lVert (\lambda^\zeta)^{\sqrt{n}}- \omega(t)^\zeta \rVert_{\sup} > \epsilon\Bigr\}\Bigr) =0.
\end{equation}
The limit shape $\omega(t)$ is described in terms of free cumulants of transition measures 
$\mathfrak{m}_{\omega(t)^\zeta}$, $\zeta\in \widehat{T}$, as 
\begin{align}
&R_1 (\mathfrak{m}_{\omega(0)^\zeta}) =0, & 
&R_{k+1}(\mathfrak{m}_{\omega(0)^\zeta}) = \frac{1}{\lvert T\rvert} \sum_{x\in T} 
\gamma_{k+1}^x \overline{\zeta(x)} \qquad (k\in\mathbb{N}); 
\label{eq:1-35} \\ 
&R_1 (\mathfrak{m}_{\omega(t)^\zeta}) =0, & 
&R_2 (\mathfrak{m}_{\omega(t)^\zeta}) = \frac{1- e^{-\frac{t}{m}}}{\lvert T\rvert} 
+ e^{-\frac{t}{m}} R_2 (\mathfrak{m}_{\omega(0)^\zeta}), \notag \\ 
&\quad & &R_{k+1}(\mathfrak{m}_{\omega(t)^\zeta}) = 
e^{-\frac{kt}{m}} R_{k+1}(\mathfrak{m}_{\omega(0)^\zeta}) \qquad (k\geqq 2).
\label{eq:1-36}
\end{align}
\end{theorem}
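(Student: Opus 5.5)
The plan is to reduce the dynamical statement to a static concentration result at each fixed macroscopic time $t$. First I would compute $f^{(n)}_{tn}$ explicitly from $f^{(n)}_0$, then check that the hypotheses \eqref{eq:1-31}, \eqref{eq:1-32} and AFP \eqref{eq:1-28} persist at time $t$ with modified cycle limits, and finally prove LLN for a family satisfying these hypotheses using Theorems~\ref{th:1-1} and \ref{th:1-2}.

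\emph{Step 1 (action of the chain on central functions).} By \eqref{eq:1-21}, the transition matrix \eqref{eq:1-22} acts on the functions \eqref{eq:1-25} as multiplication by the normalized character of $\mathrm{Ind}\circ\mathrm{Res}$. I would use Frobenius' formula for induced characters, noting that the coset space $\mathfrak{S}_n(T)/\mathfrak{S}_{n-1}(T)$ has $n\lvert T\rvert$ elements, labelled by pairs $(j,t)$. An element $g=(x_1,\dots,x_n)\tau$ fixes the coset $(j,t)$ exactly when $\tau(j)=j$ and $x_j=e_T$. Hence one step maps $f$ to $g\mapsto \bigl(1-\lvert\mathrm{supp}\,g\rvert/n\bigr)f(g)$. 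Averaging over the counting process then gives
\[
f^{(n)}_{tn}(g)=\mathbb{E}\Bigl[\Bigl(1-\tfrac{\lvert\mathrm{supp}\,g\rvert}{n}\Bigr)^{N_{tn}}\Bigr]f^{(n)}_0(g).
\]
By the renewal LLN, $N_{tn}/n\to t/m$, so the prefactor tends to $e^{-\lvert\mathrm{supp}\,g\rvert t/m}$. An $(x,k)$-cycle with $k\geqq 2$, or with $k=1$ and $x\neq e_T$, has support of size $k$. The hypothesis \eqref{eq:1-31} therefore holds at time $t$ with
\[
\gamma^x_{k+1}(t)=e^{-kt/m}\gamma^x_{k+1},
\]
except that $\gamma^{e_T}_2(t)=1$ stays fixed. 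Condition \eqref{eq:1-32} is clearly preserved.

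\emph{Step 2 (AFP at time $t$).} For disjoint supports of sizes $a$ and $b$, I need to show
\[
\mathbb{E}\bigl[(1-\tfrac{a+b}{n})^{N}\bigr]-\mathbb{E}\bigl[(1-\tfrac{a}{n})^{N}\bigr]\,\mathbb{E}\bigl[(1-\tfrac{b}{n})^{N}\bigr]=o(1),\qquad N=N_{tn}.
\]
Combined with AFP at time $0$ and the orders of $f_0$ implied by \eqref{eq:1-31}, this yields \eqref{eq:1-28} for $f^{(n)}_{tn}$. The estimate follows once $N_{tn}/n$ concentrates at $t/m$, for instance via a variance bound of order $o(n^2)$. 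This is the step where I expect to need \eqref{eq:1-33}: it gives control of the renewal function and of the law of $N_{tn}$ through Fourier inversion, uniformly enough to pass to the limit inside these expectations.

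\emph{Step 3 (static LLN).} Given a family satisfying \eqref{eq:1-31}, \eqref{eq:1-32} and AFP, I would combine \eqref{eq:1-15} with Theorem~\ref{th:1-2}, after rescaling $J_{n+1}$ by $n^{-1/2}$.
\begin{itemize}
\item Taking $M^{(n)}$-expectations turns normalized class indicators into values of $f^{(n)}$.
\item The leading terms, indexed by $\mathrm{NC}(\sigma)$ with labels summed under $(\star)_y$, reproduce the moment--free cumulant formula with
\[
R_{k+1}(\mathfrak{m}^\zeta)=\frac{1}{\lvert T\rvert}\sum_{x}\gamma^x_{k+1}\overline{\zeta(x)}.
\]
The terms $N_\sigma$ are negligible by \eqref{eq:1-20}.
\item Evaluating at time $t$ with the values from Step~1 gives \eqref{eq:1-36}. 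In particular, the term $\gamma^{e_T}_2(t)=1$ produces the summand $(1-e^{-t/m})/\lvert T\rvert$ in $R_2$.
\end{itemize}

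For concentration I would estimate the variance of each rescaled moment $M_k(\mathfrak{m}_{(\lambda^\zeta)^{\sqrt n}})$. Writing the square as a product of two class-indicator expansions, AFP shows that the product of expectations matches the expectation of the product up to $o(1)$. The merged-cycle terms arising from multiplying class indicators are of lower order. Then \eqref{eq:1-32} makes the limiting moment problem determinate, and Kerov's correspondence between transition measures and diagrams upgrades moment convergence in probability to the sup-norm statement \eqref{eq:1-34}.

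The main obstacle is this variance computation. It requires bounding products of class indicators in $\mathbb{C}[\mathfrak{S}_n(T)]$ together with the $N_\sigma$ error terms at exactly the orders allowed by \eqref{eq:1-28}.
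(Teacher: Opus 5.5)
Your plan is essentially the paper's proof. It runs through the same steps: $f^{(n)}_{tn}=a(\lvert\mathrm{supp}\,g\rvert,n,tn)\,f^{(n)}_0$ from the Ind--Res eigenvalue (which you derive directly by counting fixed cosets, whereas the paper cites it), then propagation of the cycle limits with $\gamma^{e_T}_2=1$ frozen and of AFP, then the averaged moment computation from Theorems~\ref{th:1-1} and \ref{th:1-2}, then an $L^2$ bound, then the moment-topology argument. In that $L^2$ bound, your claim that overlapping products of class indicators are lower order is exactly the inequality $\lvert\mathrm{type}(gh)\rvert-l(\mathrm{type}(gh))\geqq\lvert\rho\rvert-l(\rho)+\lvert\sigma\rvert-l(\sigma)-2r+2$, which the paper imports from the symmetric-group case. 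One correction: Step~2 does not need \eqref{eq:1-33}, since your renewal-SLLN plus bounded-convergence argument already gives $a(k,n,tn)\to e^{-kt/m}$ for every $k$ under finite mean alone, and $e^{-(p+q)t/m}=e^{-pt/m}e^{-qt/m}$ for support sizes $p,q$ then finishes it; the paper instead obtains this limit by citing results proved under \eqref{eq:1-33}.
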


Proof of Theorem~\ref{th:1-3} is given in \S4. 

\medskip

\begin{table}[htb]
\begin{tabular}{lll} \toprule 
& reference \cite{Hor25} & present paper \\ 
\midrule 
limit shape & averaged & concentrated \\ 
finite group for wreath product & general & mainly, abelian \\ 
pausing time distribution & exponential-like, stable & exponential-like \\ 
JM element & not used & used \\ 
AFP & not used & used \\ 
\bottomrule 
\end{tabular}
\caption{Differences between two companion papers}
\label{tab:1-2}
\end{table}

\noindent\textbf{Remark} \ 
(1) Rescale in \eqref{eq:1-31} seems to be natural if one considers normalized irreducible 
character $\chi^{(n)}$ of $\mathfrak{S}_n$. 
In the regime having a limit under rescale of \eqref{eq:1-29}, it is known that 
$n^{\downarrow k} \chi^{(n)}(k\text{-cycle})$ is as large as ($k+1$)th cumulant or moment 
of Kerov's transition measure, and hence is of order $n^{\frac{k+1}{2}}$. 
This is consistent to \eqref{eq:1-31}. 

(2) Relative size of each entry at any time $t$ is connected with variance 
(the second free cumulant) of transition measure by 
\[ 
R_2( \mathfrak{m}_{\omega(t)^\zeta}) = \lim_{n\to\infty} 
\mathbb{E}_{M^{(n)}_{tn}} \Bigl[ \frac{\lvert\lambda^\zeta\rvert}{n} \Bigr], 
\qquad t\geqq 0.
\] 

(3) The relation \eqref{eq:1-36} for free cumulants can be rewritten as 
\[ 
\mathfrak{m}_{\omega(t)^\zeta} = (\mathfrak{m}_{\omega(0)^\zeta})_{e^{-t/m}} 
\boxplus \mathfrak{c}_{1-e^{-t/m}} 
\] 
where $\mathfrak{c}$ is the semi-circle distribution with mean $0$ and variance $1/\lvert T\rvert$, 
$\boxplus$ denotes the free additive convolution, and subscription $(\ )_c$ denotes 
the free compression by rank $c$ projection. 
We refer to \cite{VoDyNi92} and \cite{NiSp06} for notions in free probability. 

(4) In \cite{Hor25}, we considered averaged limit shapes also in the case of a pausing time 
obeying a (heavy-tailed) stable distribution and derived dynamical behavior of 
limit shapes different from \eqref{eq:1-36} (\cite[Proposition~4.4]{Hor25}). 
As we showed in \cite[Theorem~1.5]{Hor20}, however, if pausing time obeys a stable 
distributions, then initial AFP is essentially never propagated 
at positive macroscopic time under any space-time rescaling. 
Hence that model does not fall into the framework of concentrated limit shapes such as 
Theorem~\ref{th:1-3}. 
While \cite{Hor25} is a companion paper of the present one, we list some of their differences 
in Table~\ref{tab:1-2}.
\hfill $\square$

\medskip

The subsequent sections are devoted to proofs of Theorems~\ref{th:1-1}, \ref{th:1-2} 
and \ref{th:1-3}.

\section{Proof of Theorem~\ref{th:1-1}} 
We show Theorem~\ref{th:1-1} in three steps. 

\textit{Step~1}. In general, let $G$ be a finite group, $H$ its subgroup, and 
$\mathrm{E} : G \longrightarrow H$ the restriction map as \eqref{eq:1-6}. 
For $\alpha\in\widehat{H}$ and $b\in \mathbb{C}[G]$, we see from structure 
of group algebras  
\begin{equation}\label{eq:2-1}
\mathrm{tr} \pi^\alpha(\mathrm{E}b) = \frac{1}{[G:H] \dim\alpha} 
\sum_{\beta\in\widehat{G}} \dim\beta \, \mathrm{tr} \pi^\beta(b e^\alpha)
\end{equation}
where $\pi^\alpha$ [resp.$\;\pi^\beta$] is an irreducible representation of $H$ [resp.$\;G$], 
$e^\alpha$ is a minimal central projection in $\mathbb{C}[H]$, and $\mathrm{tr}$ denotes 
a (non-normalized) trace (see \cite[2.2]{Hor05}, \cite[Lemma~9.24]{HoOb07}). 
We apply \eqref{eq:2-1} to $G = \mathfrak{S}_{n+1}(T)$, $H=\mathfrak{S}_n(T)$, 
$\alpha = \lambda = (\lambda^\zeta)_{\zeta\in\widehat{T}}\in \mathbb{Y}_n(\widehat{T})$, 
and $b = J_{n+1}^{\ k} (A_\theta, (n\!+\!1)) = J_{n+1}^{\ k-1} J_{n+1}^\theta$. 

\textit{Step~2}. 
For $\mu = (\mu^\zeta)_{\zeta\in\widehat{T}}\in \mathbb{Y}_{n+1}(\widehat{T})$, 
let $(\pi^\mu, V^\mu)$ be a corresponding irreducible representation of $\mathfrak{S}_{n+1}(T)$. 
The action of $\mathfrak{S}_n(T)$ decomposes $V^\mu$ as 
\begin{equation}\label{eq:2-2}
V^\mu = \bigoplus_{\lambda\in\mathbb{Y}_n(\widehat{T}):\, \lambda\nearrow\mu} 
(W^\lambda)^{\oplus \dim\zeta_{\lambda\mu}}, \qquad 
W^\lambda \cong_{\mathfrak{S}_n(T)} V^\lambda.
\end{equation}
We note $\pi^\mu (J_{n+1}^\theta)$ acts as a scalar on $W^\lambda$ since 
$J_{n+1}^\theta$ commutes with $\mathfrak{S}_n(T)$ by \eqref{eq:1-13}. 
Here we find the scalar value through irreducible characters. 
Note $\pi^\mu(A_{\iota_{n+1, 2} (2)_\theta})$ [resp.
$\pi^\lambda (A_{\iota_{n, 2} (2)_\theta})$] acts on $V^\mu$ (hence on $W^\lambda$) 
[resp. on $W^\lambda$] of \eqref{eq:2-2} as a scalar 
\[ 
\frac{\chi^\mu_{\iota_{n+1, 2}(2)_\theta}}{\dim\mu} 
\lvert C_{\iota_{n+1, 2}(2)_\theta}\rvert \qquad \Bigl[ \text{resp.} \quad 
\frac{\chi^\lambda_{\iota_{n, 2}(2)_\theta}}{\dim\lambda} 
\lvert C_{\iota_{n, 2}(2)_\theta}\rvert \Bigr].
\] 
Character formula for a wreath product yields, in particular, 
\[ 
\frac{\chi^\lambda_{\iota_{n, k}(k)_\theta}}{\dim\lambda} = 
\frac{1}{n^{\downarrow k}} \sum_{\zeta\in\widehat{T}} 
\frac{\chi^\zeta_\theta}{(\dim\zeta)^k} \Sigma_k(\lambda^\zeta) 
\] 
(see \cite[(2.4)]{Hor25}) where 
\[ 
\Sigma_k(\lambda^\zeta) := \lvert\lambda^\zeta\rvert^{\downarrow k} 
\frac{\chi^{\lambda^\zeta}_{(k, 1^{\lvert\lambda^\zeta\rvert-k})}}{\dim\lambda^\zeta}
\] 
is an often used notation. 
Frobenius' formula tells us 
\begin{equation}\label{eq:2-6}
\Sigma_2(\nu) = 
\sum_{i=1}^d \bigl( b_i(b_i+1) - a_i(a_i+1)\bigr), \qquad \nu\in\mathbb{Y} 
\end{equation}
where $b_i := \nu_i-i$ and $a_i := \nu^\prime_i-i$ are the Frobenius coordinates of $\nu$ 
($d$ : number of diagonal boxes, $\nu^\prime$ : transposed $\nu$). 
Furthermore, 
\[ 
\lvert C_{\iota_{n, 2}(2)_\theta}\rvert = \frac{n(n-1)}{2} \lvert T\rvert \lvert C_\theta\rvert 
\] 
holds. 
Putting these together, we have for $\lambda\nearrow\mu$ 
\begin{align}
\frac{\chi^\mu_{\iota_{n+1, 2}(2)_\theta}}{\dim\mu} 
\lvert C_{\iota_{n+1, 2}(2)_\theta}\rvert - 
\frac{\chi^\lambda_{\iota_{n, 2}(2)_\theta}}{\dim\lambda} 
\lvert C_{\iota_{n, 2}(2)_\theta}\rvert 
&= \frac{\lvert T\rvert \lvert C_\theta\rvert}{2}
\frac{\chi^{\zeta_{\lambda\mu}}_\theta}{(\dim\zeta_{\lambda\mu})^2} 
\bigl( \Sigma_2(\mu^{\zeta_{\lambda\mu}})-
\Sigma_2(\lambda^{\zeta_{\lambda\mu}})\bigr) \notag \\ 
&= \frac{\lvert T\rvert \lvert C_\theta\rvert
\chi^{\zeta_{\lambda\mu}}_\theta}{(\dim\zeta_{\lambda\mu})^2} \, 
c(\mu^{\zeta_{\lambda\mu}}\setminus \lambda^{\zeta_{\lambda\mu}}) 
\label{eq:2-8}
\end{align}
where $c(\square)$ denotes the content of box $\square$. 
The second equality of \eqref{eq:2-8} follows from that, if $\nu, \Tilde{\nu}\in \mathbb{Y}$ 
satisfy $\nu\nearrow\Tilde{\nu}$, \eqref{eq:2-6} yields 
\[ 
\Sigma_2(\Tilde{\nu}) -\Sigma_2(\nu) = 2 c(\Tilde{\nu}\setminus\nu).
\] 

\textit{Step~3}. 
Since we showed that $\pi^\mu(J_{n+1}^\theta)$ acts on $W^\lambda$ as the scalar 
given by \eqref{eq:2-8}, so does $\pi^\mu(J_{n+1}^{\ k-1} J_{n+1}^\theta)$ as 
\begin{equation}\label{eq:2-10}
\lvert T\rvert^k \lvert C_\theta\rvert \frac{\chi^{\zeta_{\lambda\mu}}_\theta}
{(\dim\zeta_{\lambda\mu})^{k+1}} \, 
c(\mu^{\zeta_{\lambda\mu}}\setminus \lambda^{\zeta_{\lambda\mu}})^k.
\end{equation}
We come back to Step~1. 
Now \eqref{eq:2-1} yields 
\begin{equation}\label{eq:2-11}
\mathrm{tr} \pi^\lambda \bigl( \mathrm{E}\bigl[ J_{n+1}^{\ k} 
(A_\theta, (n\!+\!1))\bigr]\bigr) = \frac{1}{(n+1)\lvert T\rvert} 
\sum_{\mu\in \mathbb{Y}_{n+1}(\widehat{T})} \frac{\dim\mu}{\dim\lambda} \,
\mathrm{tr} \pi^\mu \bigl( J_{n+1}^{\ k} (A_\theta, (n\!+\!1))\bigr) 
\pi^\mu(e^\lambda). 
\end{equation}
Since $\pi^\mu(e^\lambda)$ is the projection onto the $\lambda$-isotypical component 
$(W^\lambda)^{\oplus \dim\zeta_{\lambda\mu}}$, we have from \eqref{eq:2-10} 
and \eqref{eq:2-11} 
\begin{equation}\label{eq:2-12}
\frac{\chi^\lambda\bigl( \mathrm{E}\bigl[ J_{n+1}^{\ k} 
(A_\theta, (n\!+\!1))\bigr]\bigr)}{\dim\lambda} 
= \frac{\lvert T\rvert^k \lvert C_\theta\rvert}{(n+1)\lvert T\rvert} 
\sum_{\zeta\in\widehat{T}} \sum_{\mu^\zeta:\, \lambda^\zeta\nearrow\mu^\zeta} 
\frac{\dim\mu}{\dim\lambda} \frac{\chi^\zeta_\theta}
{(\dim\zeta)^k} \, c(\mu^\zeta\setminus \lambda^\zeta)^k.
\end{equation}
Applying the dimension formula to \eqref{eq:2-12}, we continue as 
\[ 
\text{\eqref{eq:2-12}} = \sum_{\zeta\in\widehat{T}} 
\frac{\lvert T\rvert^{k-1}\lvert C_\theta\rvert}{(\dim\zeta)^{k-1}} \chi^\zeta_\theta  
\sum_{\mu^\zeta:\, \lambda^\zeta\nearrow\mu^\zeta} 
\frac{\dim\mu^\zeta}{(\lvert\lambda^\zeta\rvert +1) \dim\lambda^\zeta} 
c(\mu^\zeta\setminus\lambda^\zeta)^k 
= \sum_{\zeta\in\widehat{T}} 
\frac{\lvert T\rvert^{k-1}\lvert C_\theta\rvert}{(\dim\zeta)^{k-1}} \chi^\zeta_\theta  
M_k(\mathfrak{m}_{\lambda^\zeta}).
\] 
We have thus showed \eqref{eq:1-16}. 
Noting $(\chi^\zeta_\theta \sqrt{\lvert C_\theta\rvert / \lvert T\rvert})_{\zeta,\theta}$ 
is a unitary matrix, we see \eqref{eq:1-16} is equivalent to \eqref{eq:1-15}. 
This completes the proof of Theorem~\ref{th:1-1}.

\section{Proof of Theorem~\ref{th:1-2}} 
In this section, $T$ is assumed to be a finite abelian group. 

\subsection{Preparatory argument} 
In \eqref{eq:1-17}, we separate the extra terms and recognize their form as desired. 
Recalling 
\[ 
J_{n+1} = \sum_{i=1}^n \sum_{x\in T} (x^{-1}, (i)) (x, (\ast)) (i\ \ast), \qquad 
\ast := n+1, 
\] 
we have for $k\in\mathbb{N}$ and $y\in T$ 
\begin{align}
&J_{n+1}^{\ k} (y, (\ast)) 
= \sum_{i_1, \cdots, i_k =1}^n \sum_{x_1, \cdots, x_k\in T} 
(y, (\ast)) (x_k^{-1}, (i_k)) (x_k, (\ast)) (i_k\ \ast) \cdots 
(x_1^{-1}, (i_1)) (x_1, (\ast)) (i_1\ \ast) \notag \\ 
&= \sum_{i_1, \cdots, i_k =1}^n \sum_{x_1, \cdots, x_k\in T} 
\underbrace{(y, (\ast)) (x_k^{-1}, (i_k)) (x_k, (\ast)) (i_k\ \ast) \cdots 
(x_1^{-1}, (i_1)) (x_1, (\ast)) (i_1\ \ast) (i_1\ \ast) \cdots (i_k\ \ast)}_{\qquad (\star)\in T^{n+1}} 
\notag \\ 
&\qquad\qquad\qquad
\underbrace{(i_k\ \ast) \cdots (i_1\ \ast)}_{\qquad (\star\star)\in \mathfrak{S}_{n+1}}.
\label{eq:3-1-2}
\end{align}
Mapped by 
$\mathrm{E} : \mathbb{C}[\mathfrak{S}_{n+1}(T)] \longrightarrow 
\mathbb{C}[\mathfrak{S}_n(T)]$, 
each term in \eqref{eq:3-1-2} survives if and only if $(\star)\in T^n$ and 
$(\star\star)\in\mathfrak{S}_n$ hold. 
We know in \cite{Hor06} when there exists $i_1, \cdots, i_k$ such that 
$(\star\star)\in\mathfrak{S}_n$ holds. 

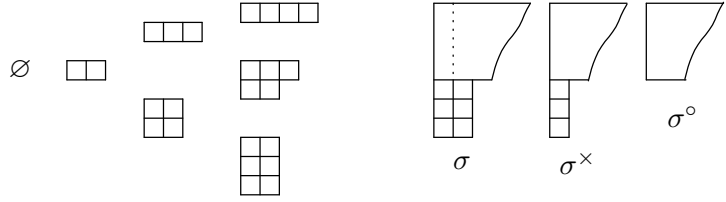
\begin{figure}[hbtp]
\centering
{\unitlength 0.1in%
\begin{picture}(37.0000,10.0500)(4.0000,-14.0000)%
%
\special{pn 8}%
\special{pa 700 700}%
\special{pa 900 700}%
\special{fp}%
\special{pa 900 700}%
\special{pa 900 800}%
\special{fp}%
\special{pa 900 800}%
\special{pa 700 800}%
\special{fp}%
\special{pa 700 800}%
\special{pa 700 700}%
\special{fp}%
\special{pa 800 700}%
\special{pa 800 800}%
\special{fp}%
\special{pa 1100 600}%
\special{pa 1100 500}%
\special{fp}%
\special{pa 1100 500}%
\special{pa 1400 500}%
\special{fp}%
\special{pa 1400 500}%
\special{pa 1400 600}%
\special{fp}%
\special{pa 1400 600}%
\special{pa 1100 600}%
\special{fp}%
\special{pa 1200 600}%
\special{pa 1200 500}%
\special{fp}%
\special{pa 1300 500}%
\special{pa 1300 600}%
\special{fp}%
\special{pa 1100 900}%
\special{pa 1300 900}%
\special{fp}%
\special{pa 1300 900}%
\special{pa 1300 1100}%
\special{fp}%
\special{pa 1300 1100}%
\special{pa 1100 1100}%
\special{fp}%
\special{pa 1100 1100}%
\special{pa 1100 900}%
\special{fp}%
\special{pa 1200 900}%
\special{pa 1200 1100}%
\special{fp}%
\special{pa 1100 1000}%
\special{pa 1300 1000}%
\special{fp}%
\special{pa 1600 400}%
\special{pa 2000 400}%
\special{fp}%
\special{pa 2000 400}%
\special{pa 2000 500}%
\special{fp}%
\special{pa 2000 500}%
\special{pa 1600 500}%
\special{fp}%
\special{pa 1600 500}%
\special{pa 1600 400}%
\special{fp}%
\special{pa 1700 400}%
\special{pa 1700 500}%
\special{fp}%
\special{pa 1800 500}%
\special{pa 1800 400}%
\special{fp}%
\special{pa 1900 400}%
\special{pa 1900 500}%
\special{fp}%
\special{pa 1600 700}%
\special{pa 1900 700}%
\special{fp}%
\special{pa 1900 700}%
\special{pa 1900 800}%
\special{fp}%
\special{pa 1900 800}%
\special{pa 1600 800}%
\special{fp}%
\special{pa 1600 700}%
\special{pa 1600 900}%
\special{fp}%
\special{pa 1600 900}%
\special{pa 1800 900}%
\special{fp}%
\special{pa 1800 900}%
\special{pa 1800 700}%
\special{fp}%
\special{pa 1700 700}%
\special{pa 1700 900}%
\special{fp}%
\special{pa 1600 1100}%
\special{pa 1800 1100}%
\special{fp}%
\special{pa 1800 1100}%
\special{pa 1800 1400}%
\special{fp}%
\special{pa 1800 1400}%
\special{pa 1600 1400}%
\special{fp}%
\special{pa 1600 1400}%
\special{pa 1600 1100}%
\special{fp}%
\special{pa 1700 1100}%
\special{pa 1700 1400}%
\special{fp}%
\special{pa 1600 1300}%
\special{pa 1800 1300}%
\special{fp}%
\special{pa 1800 1200}%
\special{pa 1600 1200}%
\special{fp}%
\put(4.0000,-8.0000){\makebox(0,0)[lb]{$\varnothing$}}%
%
\special{pn 8}%
\special{pa 2600 400}%
\special{pa 2600 1100}%
\special{fp}%
\special{pa 2600 1100}%
\special{pa 2800 1100}%
\special{fp}%
\special{pa 2800 1100}%
\special{pa 2800 800}%
\special{fp}%
\special{pa 2600 800}%
\special{pa 2900 800}%
\special{fp}%
\special{pa 2700 800}%
\special{pa 2700 1100}%
\special{fp}%
\special{pa 2600 1000}%
\special{pa 2800 1000}%
\special{fp}%
\special{pa 2800 900}%
\special{pa 2600 900}%
\special{fp}%
\special{pa 2600 400}%
\special{pa 3100 400}%
\special{fp}%
\special{pa 3200 400}%
\special{pa 3600 400}%
\special{fp}%
\special{pa 3200 400}%
\special{pa 3200 1100}%
\special{fp}%
\special{pa 3200 1100}%
\special{pa 3300 1100}%
\special{fp}%
\special{pa 3300 1100}%
\special{pa 3300 800}%
\special{fp}%
\special{pa 3200 800}%
\special{pa 3400 800}%
\special{fp}%
\special{pa 3200 900}%
\special{pa 3300 900}%
\special{fp}%
\special{pa 3300 1000}%
\special{pa 3200 1000}%
\special{fp}%
\special{pa 3700 400}%
\special{pa 4100 400}%
\special{fp}%
\special{pa 3700 400}%
\special{pa 3700 800}%
\special{fp}%
\special{pa 3900 800}%
\special{pa 3900 800}%
\special{fp}%
%
\special{pn 8}%
\special{pa 2700 400}%
\special{pa 2700 800}%
\special{dt 0.045}%
%
\special{pn 8}%
\special{pa 3700 800}%
\special{pa 3900 800}%
\special{fp}%
%
\special{pn 8}%
\special{pa 3900 795}%
\special{pa 3916 733}%
\special{pa 3926 703}%
\special{pa 3938 673}%
\special{pa 3951 644}%
\special{pa 3968 616}%
\special{pa 3986 590}%
\special{pa 4007 565}%
\special{pa 4026 539}%
\special{pa 4043 513}%
\special{pa 4057 484}%
\special{pa 4069 454}%
\special{pa 4083 425}%
\special{pa 4099 397}%
\special{pa 4100 395}%
\special{fp}%
%
\special{pn 8}%
\special{pa 3400 805}%
\special{pa 3416 743}%
\special{pa 3426 713}%
\special{pa 3438 683}%
\special{pa 3451 654}%
\special{pa 3468 626}%
\special{pa 3486 600}%
\special{pa 3507 575}%
\special{pa 3526 549}%
\special{pa 3543 523}%
\special{pa 3557 494}%
\special{pa 3569 464}%
\special{pa 3583 435}%
\special{pa 3599 407}%
\special{pa 3600 405}%
\special{fp}%
%
\special{pn 8}%
\special{pa 2900 800}%
\special{pa 2916 738}%
\special{pa 2926 708}%
\special{pa 2938 678}%
\special{pa 2951 649}%
\special{pa 2968 621}%
\special{pa 2986 595}%
\special{pa 3007 570}%
\special{pa 3026 544}%
\special{pa 3043 518}%
\special{pa 3057 489}%
\special{pa 3069 459}%
\special{pa 3083 430}%
\special{pa 3099 402}%
\special{pa 3100 400}%
\special{fp}%
\put(27.0000,-12.0000){\makebox(0,0)[lt]{$\sigma$}}%
\put(32.5000,-11.7000){\makebox(0,0)[lt]{$\sigma^\times$}}%
\put(38.1000,-9.4000){\makebox(0,0)[lt]{$\sigma^\circ$}}%
\end{picture}}%
\caption{Left: stratification of Young diagrams $\overline{\mathbb{Y}}$, \ 
Right: operations \eqref{eq:3-1-4} for $\sigma$}
\label{fig:3-1-1}
\end{figure}

Let us recall a certain random walk on Young diagrams used in \cite{Hor06}. 
We stratify $\overline{\mathbb{Y}} = \{ \tau\in\mathbb{Y} \,\vert\, m_1(\tau) =0\}$ 
according to the value of $\lvert\tau\rvert -l(\tau)$ (Figure~\ref{fig:3-1-1}, Left). 
Letting $n$ be large enough, consider the transition forming $(\star\star)$ 
\begin{equation}\label{eq:3-1-3}
e \to (i_1\ \ast) \to (i_2\ \ast)(i_1\ \ast) \to \cdots \to (i_k\ \ast)\cdots (i_1\ \ast), 
\quad i_1, i_2, \cdots, i_k\in\{1,2,\dots, n\}.
\end{equation}
The transition of cycle types of \eqref{eq:3-1-3} is a walk on $\overline{\mathbb{Y}}$ 
in Figure~\ref{fig:3-1-1} where each step goes up or down between neighboring strata. 
We define $\sigma^\times\in\mathbb{Y}$ and $\sigma^\circ\in\overline{\mathbb{Y}}$ for 
$\sigma = (2^{m_2(\sigma)}\cdots j^{m_j(\sigma)}\cdots)\in\overline{\mathbb{Y}}$ 
by (Figure~\ref{fig:3-1-1}, Right) 
\begin{equation}\label{eq:3-1-4}
\sigma^\times := (1^{m_2(\sigma)}\cdots j^{m_{j+1}(\sigma)}\cdots), \quad 
\sigma^\circ := (2^{m_3(\sigma)}\cdots j^{m_{j+1}(\sigma)}\cdots).
\end{equation}
For a given $\rho\in\overline{\mathbb{Y}}$, \cite[Lemma~1]{Hor06} tells us 
\begin{multline}\label{eq:3-1-5}
\exists i_1, \cdots, i_k\in \{1,\dots,n\} \text{ such that } (i_k\ \ast)\cdots (i_1\ \ast) 
\in \mathfrak{S}_n \text{ has type } \rho \\ 
\iff \exists \sigma\in\overline{\mathbb{Y}}_k \text{ such that } \rho = \sigma^\circ.
\end{multline}
In the walk of \eqref{eq:3-1-3} projected to their types, a step gaining a stratum 
(i.e. going to the right by one stratum in Figure~\ref{fig:3-1-1}, Left) is called an up step, while 
a step losing a stratum (i.e. going to the left by one) is called a down step. 
Given $\sigma^\circ\in \overline{\mathbb{Y}}$, let a $k$ step walk on 
$\overline{\mathbb{Y}}$ from $\varnothing$ to $\sigma^\circ$ have $u$ up steps and 
$d$ down steps. 
Since $u+d =k$ and $u-k = \lvert\sigma^\circ\rvert - l(\sigma^\circ)$ hold, we have 
\begin{equation}\label{eq:3-1-6}
d = \frac{k- (\lvert\sigma^\circ\rvert - l(\sigma^\circ))}{2} = l(\sigma), \qquad 
u= k-l(\sigma).
\end{equation}
If multiplication of $(i_j\ \ast)$ from left causes a down step, $i_j$ coincides with one 
of $i_1, \cdots, i_{j-1}$. 
On the other hand, if it is an up step, there are two possibilities. 
If $i_j$ does not coincide with any $i_1, \cdots, i_{j-1}$, let us call the step a strict-up. 
If $i_j$ coincides with one of $i_1, \cdots, i_{j-1}$, let us call the step a semi-up. 
When we count the number of walks according to power of $n$, a strict-up contributes 
by order $n$ and a semi-up contributes by order $1$. 
Hence, in counting the walks of \eqref{eq:3-1-3} which arrives at type $\sigma^\circ$ in 
\eqref{eq:3-1-5}, we have only to consider strict-up steps to get correct top terms in $n$ 
where the order is of $n^u = n^{k-l(\sigma)}$ by \eqref{eq:3-1-6}. 

A sequence $i_1, \cdots, i_k$ of \eqref{eq:3-1-3} corresponding to a walk with down 
and strict-up steps only and with a terminal of type $\sigma^\circ$ consisits of 
$l(\sigma)$ pairs $\{i_p, i_q\}$ and the other distinct singletons by \eqref{eq:3-1-6}. 
Since the terminal belongs to $\mathfrak{S}_n$, $(i_1, \cdots, i_k)$ produces a 
noncrossing partition of $\{1,\dots, k\}$ where indices $p<q$ of the $l(\sigma)$ pairs 
form the end points of its blocks. 
For example, letting $k=10$ and $i_5 =i_2$, $i_8=i_7$, $i_{10} =i_1$ with the distinct 
other $i_j$'s, we have an element of $\mathrm{NC}(10)$ in Figure~\ref{fig:1-2}. 
The $10$-step transition proceeds as 
\begin{align*}
e &\to (i_1\ \ast) \to (i_1\ i_2\ \ast) \to (i_1\ i_2\ i_3\ \ast) \to (i_1\ i_2\ i_3\ i_4\ \ast) 
\to (i_2\ i_3\ i_4)(i_1\ \ast) \\ 
&\to (i_2\ i_3\ i_4)(i_1\ i_6\ \ast) \to (i_2\ i_3\ i_4)(i_1\ i_6\ i_7\ \ast) \to 
(i_2\ i_3\ i_4)(i_7)(i_1\ i_6\ \ast) \\ 
&\to (i_2\ i_3\ i_4)(i_7)(i_1\ i_6\ i_9\ \ast) 
\to (i_2\ i_3\ i_4)(i_7)(i_1\ i_6\ i_9), 
\end{align*}
where the terminal type is $\sigma^\circ = (3^2)$ with $\sigma = (2^1 4^2)$, 
$\sigma^\times = (1^1 3^2)$. 

Applying $\mathrm{E}$ to \eqref{eq:3-1-2}, we pick up surviving terms. 
By virtue of the above arguments, we can divide them, for each 
$\sigma\in\overline{\mathbb{Y}}_k$, into the terms containing strict-ups only and the 
terms containing at least one semi-ups. 
The latter has strictly lower order than the former, and hence is of $O(n^{k-l(\sigma)-1})$. 
Recall the element $(\star)$ in \eqref{eq:3-1-2}. 
A surviving term has type $\sigma^\circ$ if it is projected to an element of $\mathfrak{S}_n$. 
However, since it carries elements of $T$ coming from $\mathrm{E}[(\star)]$, 
some $1$-box rows of $\sigma^\times$ may serve as supports in $\mathfrak{S}_n(T)$. 
This is the reason we used type $\sigma^\times$ (or with some $1$-box rows removed) 
for an element of $\mathfrak{S}_n(T)$ in the statement of Theorem~\ref{th:1-2}. 
Using $N_\sigma$ in \eqref{eq:1-20}, we thus obtain the following expression: 
\begin{equation}\label{eq:3-1-8}
\mathrm{E}[ J_{n+1}^{\ k} (y, (\ast)) ] = \sum_{\sigma\in\overline{\mathbb{Y}}_k} 
\Bigl\{ \sum_{(i_1,\cdots, i_k)\text{ forming } \mathrm{NC}(\sigma)} 
\sum_{x_1, \cdots, x_k\in T} \mathrm{E}[(\star)] h_{(i_1,\cdots, i_k)} 
+ \sum_{g\in N_\sigma} g \Bigr\}
\end{equation}
where $h_{(i_1,\cdots, i_k)}$ is an element of $\mathfrak{S}_n$ determined as the 
terminal of \eqref{eq:3-1-3}
\footnote{The type of $h_{(i_1,\cdots, i_k)}$ is $(\sigma^\circ, 1^{n-\lvert\sigma^\circ\rvert})
= (\sigma^\times, 1^{n-\lvert\sigma^\times\rvert})$. 
Viewed as the latter, $\sigma^\times$ is filled with the $k-l(\sigma)$ letters consisting of 
multiplicity-free $i_1, \cdots, i_k$.}. 

\subsection{Computation of the main terms for proof of Theorem~\ref{th:1-2}} 
Let us compute the main terms of \eqref{eq:3-1-8}. 
Fix $y\in T$, $\sigma\in \overline{\mathbb{Y}}_k$, a sequence of letters (each taken from 
$\{1,\dots, n\}$)  
$(i_1, \cdots, i_k)$ which produces an element of $\mathrm{NC}(\sigma)$, and 
$x_1, \cdots, x_k\in T$. 
We need to compute the $\mathrm{E}$-image of 
\begin{equation}\label{eq:3-2-1}
(y, (\ast)) (x_k^{-1}, (i_k)) (x_k, (\ast)) (i_k\ \ast) \cdots 
(x_1^{-1}, (i_1)) (x_1, (\ast)) (i_1\ \ast).
\end{equation}
The $\mathfrak{S}_n$-part of the $\mathrm{E}$-image of \eqref{eq:3-2-1} is known as 
\eqref{eq:3-1-8}, while it is not yet clear whether the $T^n$-part $\mathrm{E}[(\star)]$ 
survives or not. 
In computing $\mathrm{E}[\text{\eqref{eq:3-2-1}}]$, a fundamental piece is given by 
the case of one block: 
\begin{equation}\label{eq:3-2-2}
p<q, \quad i_q=i_p, \quad \text{and} \quad i_p, \cdots, i_{q-1} \text{ are distinct}.
\end{equation}
Since 
$(i_q\ \ast)\cdots (i_p\ \ast) = (i_p \ \cdots\ i_{q-1})$ holds under \eqref{eq:3-2-2}, 
elementary computation yields 
\begin{align}
&(x_q^{-1}, (i_q)) (x_q, (\ast)) (i_q\ \ast) \cdots (x_p^{-1}, (i_p)) (x_p, (\ast)) (i_p\ \ast) 
\notag \\ 
= &(x_qx_p^{-1}, (\ast))(x_q^{-1}x_{q-1}, (i_p)) 
\underbrace{(x_{p+1}^{-1}x_p, (i_{p+1})) \cdots (x_{q-1}^{-1}x_{q-2}, (i_{q-1}))}_{(\#)}
(i_p \ \cdots\ i_{q-1}). \label{eq:3-2-3}
\end{align}
In the $(\#)$ part of \eqref{eq:3-2-3}, it is clear how $(i_j)$ carries an element of $T$, 
and similarly for $(x_q^{-1}x_{q-1}, (i_p))$ since $i_q=i_p$ holds: 
\[
\begin{matrix}
(i_p) & (i_{p+1}) & \cdots & (i_r) & (i_{r+1}) & \cdots & (i_{q-1}) & (i_q=i_p) & \\ 
x_p^{-1} & x_{p+1}^{-1} & \cdots & x_r^{-1} & x_{r+1}^{-1} & \cdots & 
x_{q-1}^{-1} & x_q^{-1} &  \\ 
x_p & x_{p+1} & \cdots & x_r & x_{r+1} & \cdots & x_{q-1} & x_q & (\ast)
\end{matrix}
\] 
\begin{equation}\label{eq:3-2-4}
\Longrightarrow \quad 
\begin{matrix}
(i_{p+1}) & \cdots & (i_r) & (i_{r+1}) & \cdots & (i_{q-1}) & (i_p) & &  \\ 
x_{p+1}^{-1}x_p & \cdots & x_r^{-1}x_{r-1} & x_{r+1}^{-1}x_r & \cdots & 
x_{q-1}^{-1}x_{q-2} & x_q^{-1}x_{q-1} & & \\ 
 &  &   &  &  &  &  & x_q x_p^{-1} & (\ast).
\end{matrix} 
\end{equation}
The factor $ (x_qx_p^{-1}, (\ast))$ is sent to another computation for the block covering 
\eqref{eq:3-2-2}. 
We stratify the blocks of a noncrossing partition according to their depth. 

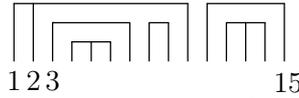
\begin{figure}[hbtp]
\centering
{\unitlength 0.1in%
\begin{picture}(14.3000,3.4500)(3.7000,-7.4500)%
%
\special{pn 8}%
\special{pa 400 400}%
\special{pa 1300 400}%
\special{fp}%
\special{pa 1300 400}%
\special{pa 1300 700}%
\special{fp}%
\special{pa 1200 700}%
\special{pa 1200 500}%
\special{fp}%
\special{pa 1200 500}%
\special{pa 1100 500}%
\special{fp}%
\special{pa 1100 500}%
\special{pa 1100 700}%
\special{fp}%
\special{pa 1000 700}%
\special{pa 1000 500}%
\special{fp}%
\special{pa 600 500}%
\special{pa 600 500}%
\special{fp}%
\special{pa 600 500}%
\special{pa 600 700}%
\special{fp}%
\special{pa 500 700}%
\special{pa 500 400}%
\special{fp}%
\special{pa 400 400}%
\special{pa 400 700}%
\special{fp}%
\special{pa 700 700}%
\special{pa 700 600}%
\special{fp}%
\special{pa 700 600}%
\special{pa 900 600}%
\special{fp}%
\special{pa 900 600}%
\special{pa 900 700}%
\special{fp}%
\special{pa 800 700}%
\special{pa 800 600}%
\special{fp}%
\special{pa 1000 500}%
\special{pa 600 500}%
\special{fp}%
\special{pa 1400 400}%
\special{pa 1400 700}%
\special{fp}%
\special{pa 1500 700}%
\special{pa 1500 500}%
\special{fp}%
\special{pa 1500 500}%
\special{pa 1700 500}%
\special{fp}%
\special{pa 1700 500}%
\special{pa 1700 700}%
\special{fp}%
\special{pa 1600 700}%
\special{pa 1600 500}%
\special{fp}%
\special{pa 1400 400}%
\special{pa 1800 400}%
\special{fp}%
\special{pa 1800 400}%
\special{pa 1800 700}%
\special{fp}%
\put(4.0000,-8.0000){\makebox(0,0){1}}%
\put(5.0000,-8.0000){\makebox(0,0){2}}%
\put(6.0000,-8.0000){\makebox(0,0){3}}%
\put(18.2000,-8.1000){\makebox(0,0){15}}%
\end{picture}}%
\caption{An element of $\mathrm{NC}(15)$ with 1st stratum: $\{4,5,6\}$; 
2nd stratum: $\{3,7\}$, $\{8,9\}$, $\{12,13,14\}$; 
3rd stratum: $\{1,2,10\}$, $\{11,15\}$}
\end{figure}

Firstly, we perform the computation of \eqref{eq:3-2-3} for the blocks in the first stratum 
like \eqref{eq:3-2-2}. 
As a result, we get 
\begin{equation}\label{eq:3-2-5}
(x_q^{-1}x_{q-1}, (i_p)) (x_{p+1}^{-1}x_p, (i_{p+1})) \cdots (x_{q-1}^{-1}x_{q-2}, (i_{q-1}))
(i_p \ \cdots\ i_{q-1}) \in \mathfrak{S}_n(T), 
\end{equation}
which is kept until the end without being affected by subsequent computations. 
If we look at a conjugacy class of $\mathfrak{S}_n(T)$, we can take 
\begin{equation}\label{eq:3-2-6}
x_q^{-1}x_{q-1}x_{p+1}^{-1}x_p \cdots x_{q-1}^{-1}x_{q-2} = x_q^{-1}x_p
\end{equation}
as an element of $T$ carried by the cycle $(i_p \ \cdots\ i_{q-1})$ in \eqref{eq:3-2-5}. 
After dealing with the blocks in the first stratum, we have some completed cycles in 
$\mathfrak{S}_n(T)$ like \eqref{eq:3-2-5} and elements carried by $(\ast)$ like 
$(x_qx_p^{-1}, (\ast))$. 
We have only to remember the product of \eqref{eq:3-2-6} for the elements of $T$ carried 
by a cycle of \eqref{eq:3-2-5} since we want to find which conjugacy class an element 
of $\mathfrak{S}_n(T)$ belongs to. 
Thus after the computation for the blocks 
$(i_p\ i_{p+1}\ \cdots\ i_{q-1}\ i_q=i_p)$, $(i_r\ i_{r+1}\ \cdots\ i_{s-1}\ i_s=i_r)$, 
$\cdots$ in the first stratum, there remain completed cycles 
\begin{equation}\label{eq:3-2-7}
\{ (x_q^{-1} x_p, (i_p)) (i_p\ \cdots \ i_{q-1}) \,\vert\, [p, q] 
\text{ is a block in the first stratum}\}
\end{equation}
and elements sent to next stage 
\begin{equation}\label{eq:3-2-8}
\{ (x_q x_p^{-1}, (\ast)) \,\vert\, [p, q] \text{ is a block in the first stratum}\}.
\end{equation}
The elements of \eqref{eq:3-2-8} are either left to the outer blocks in the second stratum 
or in the final stage if the first stratum is the outest. 

\begin{figure}[hbtp]
\centering
{\unitlength 0.1in%
\begin{picture}(47.1000,3.7000)(-1.5000,-7.7000)%
%
\special{pn 8}%
\special{pa 400 400}%
\special{pa 400 700}%
\special{fp}%
\special{pa 400 400}%
\special{pa 1600 400}%
\special{fp}%
\special{pa 1600 400}%
\special{pa 1600 700}%
\special{fp}%
\special{pa 1300 400}%
\special{pa 1300 700}%
\special{fp}%
\special{pa 1150 700}%
\special{pa 1150 550}%
\special{fp}%
\special{pa 1150 550}%
\special{pa 850 550}%
\special{fp}%
\special{pa 850 550}%
\special{pa 850 700}%
\special{fp}%
\special{pa 700 700}%
\special{pa 700 400}%
\special{fp}%
%
\special{pn 8}%
\special{pa 2610 410}%
\special{pa 2610 710}%
\special{fp}%
\special{pa 2910 710}%
\special{pa 2910 410}%
\special{fp}%
\special{pa 3060 560}%
\special{pa 3060 710}%
\special{fp}%
\special{pa 3060 560}%
\special{pa 3210 560}%
\special{fp}%
\special{pa 3210 560}%
\special{pa 3210 710}%
\special{fp}%
\special{pa 3360 710}%
\special{pa 3360 410}%
\special{fp}%
\special{pa 3660 410}%
\special{pa 3660 710}%
\special{fp}%
\special{pa 3810 710}%
\special{pa 3810 560}%
\special{fp}%
\special{pa 3810 560}%
\special{pa 4110 560}%
\special{fp}%
\special{pa 4110 560}%
\special{pa 4110 710}%
\special{fp}%
\special{pa 4260 710}%
\special{pa 4260 410}%
\special{fp}%
\special{pa 4560 410}%
\special{pa 4560 710}%
\special{fp}%
\special{pa 4560 410}%
\special{pa 2610 410}%
\special{fp}%
\put(4.3000,-7.4000){\makebox(0,0)[rt]{$p^\prime$}}%
\put(7.7000,-7.6000){\makebox(0,0)[lt]{$p$}}%
\put(11.5000,-7.5000){\makebox(0,0)[rt]{$q$}}%
\put(14.7000,-7.5000){\makebox(0,0)[lt]{$q^\prime$}}%
\put(26.4000,-7.5000){\makebox(0,0)[rt]{$p^\prime$}}%
\put(30.6000,-7.6000){\makebox(0,0)[rt]{$p$}}%
\put(32.4000,-7.6000){\makebox(0,0)[rt]{$q$}}%
\put(37.6000,-7.7000){\makebox(0,0)[lt]{$r$}}%
\put(41.3000,-7.6000){\makebox(0,0)[rt]{$s$}}%
\put(45.5000,-7.4000){\makebox(0,0)[lt]{$q^\prime$}}%
\end{picture}}%
\caption{inner block(s)}
\label{fig:3-2-2}
\end{figure}
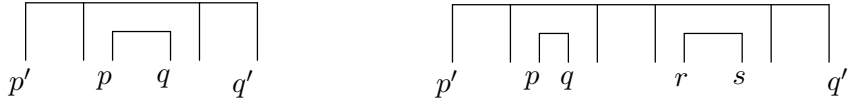

Secondly, we deal with blocks in the second stratum. 
If such a block has no inner blocks, it is treated in the same way as the first stratum. 
If $[p^\prime, q^\prime]$ holds an inner block $[p, q]$ (see Figure~\ref{fig:3-2-2}, Left), 
then as seen from \eqref{eq:3-2-4} we get a completed cycle 
\begin{equation}\label{eq:3-2-9}
(x_qx_p^{-1} x_{q^\prime}^{-1} x_{p^\prime}, (i_{p^\prime})) 
(i_{p^\prime}\ \cdots i_{p-1}\ i_{q+1}\ \cdots\ i_{q^\prime -1}) 
\end{equation}
(under product like \eqref{eq:3-2-6}) 
and $(x_{q^\prime} x_{p^\prime}^{-1}, (\ast))$ left. 
We note that $x_qx_p^{-1}$ in \eqref{eq:3-2-9}, which was sent from the inner block, 
is the inverse element of $x_q^{-1}x_p$, which was turned out in \eqref{eq:3-2-7}. 
If $[p^\prime, q^\prime]$ holds two inner blocks $[p, q]$ and $[r, s]$ 
(see Figure~\ref{fig:3-2-2}, Right), we get a completed cycle 
\begin{equation}\label{eq:3-2-10}
(x_q x_p^{-1} x_s x_r^{-1} x_{q^\prime}^{-1} x_{p^\prime}, (i_{p^\prime})) 
(i_{p^\prime}\ \cdots\ i_{p-1}\ i_{q+1}\ \cdots\ i_{r-1}\ i_{s+1}\ \cdots\ i_{q^\prime-1}) 
\end{equation}
and $(x_{q^\prime} x_{p^\prime}^{-1}, (\ast))$ left. 
In \eqref{eq:3-2-10} also, $x_q x_p^{-1} x_s x_r^{-1}$ sent from the inner blocks is the 
inverse of product of $x_q^{-1} x_p$ and $x_s^{-1} x_r$ turned out in dealing with 
the first stratum as \eqref{eq:3-2-7}. 
In general, as an element of $T$ carried by a completed cycle, we get the product of 
$x_qx_p^{-1}$'s over the inner blocks $[p, q]$ held by $[p^\prime, q^\prime]$, which are 
\begin{equation}\label{eq:3-2-11}
\text{the inverses of the elements of $T$ turned out in the first stratum treatment}, 
\end{equation}
multiplied by $x_{q^\prime}^{-1} x_{p^\prime}$. 
Also $(x_{q^\prime} x_{p^\prime}^{-1}, (\ast))$ is left, which is either sent to the 
third stratum or done if the second is the outest. 
Similar arguments proceed also for the other blocks $[r^\prime, s^\prime], \ \cdots$ in 
the second stratum. 

Our treatment comes to an end with the blocks $[t, u], \ [v, w], \ \cdots$ in the outest stratum. 
Then, carried by $(\ast)$, there remains 
\begin{equation}\label{eq:3-2-12}
(y x_u x_t^{-1} x_w x_v^{-1} \cdots, (\ast))
\end{equation}
left. 
Mapped by $\mathrm{E}$, \eqref{eq:3-2-12} survives if and only if 
\begin{equation}\label{eq:3-2-13}
x_u x_t^{-1} x_w x_v^{-1} \cdots = y^{-1}.
\end{equation}
Taking into account that the elements of $T$ turned out in dealing with each stratum 
fulfill the inverse relation of \eqref{eq:3-2-11}, we see \eqref{eq:3-2-13} holds 
if and only if 
\begin{equation}\label{eq:3-2-14}
(\text{product of the elements of $T$ carried by cycles in \eqref{eq:3-2-1} of 
type } \sigma^\times) =y.
\end{equation}

Let $x_1, \cdots, x_k$ run over $T^k$ in \eqref{eq:3-2-1} keeping 
$(i_1, \cdots, i_k)$ fixed. 
The indices of $x_j$'s appearing in the left hand side of the constraint of \eqref{eq:3-2-14} 
agree with the end points of the blocks of the noncrossing partition caused by 
$(i_1, \cdots, i_k)$. 
The other $k- 2l(\sigma)$ $x_j$'s are arbitrarily taken. 
Let $X_j, Y_k$, etc. denote those elements of $T$ which have expressions as 
$x_q x_p^{-1}$ ($p<q$). 
Using appropriate finite index sets $J, K, \cdots$, we obtain the following elements of $T$ 
carried by cycles of type $\sigma^\times$ after finishing treatments of all strata: 
\begin{align}
&\text{1st stratum} & &\{X_j^{-1}\}_{j\in J}, & &\ \notag \\ 
&\text{2nd stratum} & &\{Y_k^{-1} \prod_{j\in J_k}X_j\}_{k\in K}, & 
&J = \bigsqcup_k J_k, \notag \\ 
&\text{3rd stratum} & &\{Z_l^{-1} \prod_{k\in K_l}Y_k\}_{l\in L}, & 
&K = \bigsqcup_l K_l, \notag \\ 
&\qquad \vdots & &\ & &\ \notag \\ 
&\text{final stratum} & &\{V_n^{-1} \prod_{m\in M_n}U_m\}_{n\in N}, & 
&M = \bigsqcup_n M_n \label{eq:3-2-15}
\end{align}
(where some $J_k, K_l, \cdots$ may be empty) together with 
$\bigl(y \prod_{n\in N} V_n, (\ast)\bigr)$. 
Here all the members of 
\begin{equation}\label{eq:3-2-16}
\{X_j\}, \ \{Y_k\}, \ \{Z_l\}, \ \cdots, \ \{U_m\}, \ \{V_n\}
\end{equation}
have such forms as $x_q x_p^{-1}$ ($p<q$) with distinct $p, q$'s. 
If we naturally regard $x_j$'s as $T$-valued IID uniform random variables, 
so are \eqref{eq:3-2-16}. 
The number of random variables in \eqref{eq:3-2-16} agrees with the number of the 
blocks, that is $l(\sigma)$. 
We then see the random variables of \eqref{eq:3-2-15} are also IID and uniform on $T$. 

In order to determine a conjugacy class of $\mathfrak{S}_n(T)$, we put an element of $T$ 
on each of $l(\sigma)$ rows of $\sigma^\times$. 
Since the rows are all distinguished, there are $\lvert T\rvert^{l(\sigma)}$ configurations. 
In view of the argument of the previous paragraph, all configurations appear with the same 
probability. 
Among $x_1, \cdots, x_k \in T$, $2 l(\sigma)$ corresponding to ends of the blocks run 
in $\lvert T\rvert^{2l(\sigma)}$ ways. 
Hence they fall into each configuration by 
$\lvert T\rvert^{2l(\sigma)}/ \lvert T\rvert^{l(\sigma)}$. 
Moreover, $\lvert T\rvert^{k-2l(\sigma)}$ elements not corresponding to ends 
of the blocks run arbitrarily. 
Thus the number of the elements delivered to one configuration is 
\begin{equation}\label{eq:3-2-17}
\lvert T\rvert^{k-2l(\sigma)}\, \lvert T\rvert^{2l(\sigma)}/ \lvert T\rvert^{l(\sigma)}
= \lvert T\rvert^{k-l(\sigma)}.
\end{equation}
Among such configurations, one survives after mapped by $\mathrm{E}$ if and only if 
product of the elements of $T$ over the rows of $\sigma^\times$ coincides with $y$ 
as \eqref{eq:3-2-14}. 

Now we classify these configurations into the ones falling in the same conjugacy classes. 
For $x\in T$ and $k\in\mathbb{N}$, we recognize multiplicities coming from the 
permutations of $(x, k)$-cycles. 
For a row length $i$, choose $n_i^x$ rows out of the $m_i(\sigma^\times)$ rows 
carrying $x\in T^\circ$ ($:= T\setminus\{e_T\}$). 
Recall \eqref{eq:1-19}: 
\[ 
n_i^{e_T} := m_i(\sigma^\times) - \sum_{x\in T^\circ} n_i^x \qquad 
\text{so that} \qquad \sum_{x\in T} n_i^x = m_i(\sigma^\times).
\] 
The number of configurations falling into the same conjugacy class 
$C_{\iota_{n, \lvert\sigma^\times\rvert}((1^{n_1^x}2^{n_2^x}\cdots))_{x\in T}}$ 
is expressed as 
\begin{equation}\label{eq:3-2-19}
\binom{m_1(\sigma^\times)}{(n_1^x)_{x\in T}}
\binom{m_2(\sigma^\times)}{(n_2^x)_{x\in T}} \cdots 
\binom{m_i(\sigma^\times)}{(n_i^x)_{x\in T}} \cdots 
\end{equation}
by using multinomial coefficients. 
By \eqref{eq:3-2-17} and \eqref{eq:3-2-19}, the number of $x_1, \cdots, x_k \in T$ 
delivered to conjugacy class 
$C_{\iota_{n, \lvert\sigma^\times\rvert}((1^{n_1^x}2^{n_2^x}\cdots))_{x\in T}}$ is 
\begin{equation}\label{eq:3-2-20}
\lvert T\rvert^{k-l(\sigma)} \prod_{i\geqq 1} 
\binom{m_i(\sigma^\times)}{(n_i^x)_{x\in T}}.
\end{equation}
As the coefficient of the conjugacy indicator we get the number of \eqref{eq:3-2-20} 
divided by cardinality of the class. 

Let us take an element of $\mathrm{NC}(\sigma)$ for $\sigma\in\overline{\mathbb{Y}}_k$ 
and consider $(i_1, \cdots, i_k)$'s giving rise to it. 
Restricted to the ones corresponding to walks where only strict ups are allowed for up steps, 
such  $(i_1, \cdots, i_k)$'s count as $n^{k-l(\sigma)} (1+ O(1/n))$ ways. 
Therefore, multiplied by $\lvert \mathrm{NC}(\sigma)\rvert$, the terms of \eqref{eq:1-17} 
for $\sigma\in\overline{\mathbb{Y}}_k$ have been obtained. 
This completes the proof of Theorem~\ref{th:1-2}. 

\section{Proof of Theorem~\ref{th:1-3}} 
We begin with a remark. 
Let $(k)_\theta$ denote the type of $(\theta, k)$-cycles ($\theta\in [T]$, $k\in\mathbb{N}$): 
\[ 
\bigl( (k)_\theta\bigr)_{\theta^\prime} = \begin{cases} (k), & \theta^\prime =\theta \\ 
\varnothing, & \theta^\prime \neq\theta, \end{cases} \qquad (k)_\theta \in 
\mathbb{Y}_k([T]). 
\] 
If $\{ f^{(n)}\}_{n\in\mathbb{N}}$ satisfies AFP \eqref{eq:1-28} and 
\[ 
\lim_{n\to\infty} n^{\frac{k-1}{2}} f^{(n)} \bigl( \iota_{n,k} (k)_\theta\bigr), 
\qquad k\in\mathbb{N} 
\] 
exists, then we have for any $\rho\in \mathbb{Y}([T])$ 
\begin{equation}\label{eq:4-0-3}
f^{(n)} (\iota_{n, \lvert\rho\rvert} \rho ) = 
O\bigl( n^{-\frac{\lvert\rho\rvert - l(\rho)}{2}} \bigr) \quad (n\to\infty)
\end{equation}
by an inductive argument
\footnote{There is the same argument in \cite[Lemma~3.1]{Hor24}.}. 
Though \eqref{eq:4-0-3} holds for general $T$, we assume $T$ is abelian in this section below.

\subsection{Convergence of averaged moments of rescaled transition measures} 
In this subsection, we compute 
\[ 
\lim_{n\to\infty} n^{-k/2} \sum_{\lambda\in\mathbb{Y}_n(\widehat{T})} 
M_{tn}^{(n)}(\lambda) M_k( \mathfrak{m}_{\lambda^\zeta}) 
= \lim_{n\to\infty} \mathbb{E}_{M_{tn}^{(n)}} \bigl[ 
M_k( \mathfrak{m}_{(\lambda^\zeta)^{\sqrt{n}}}) \bigr] 
\] 
and capture \textit{averaged limit shapes} for multi-diagrams. 

\subsubsection{$t=0$ (initial ensemble)} 
Let $f_0^{(n)}$ act on \eqref{eq:1-17} and take limit of $n\to\infty$ after multiplying 
$n^{-k/2}$. 
By using \eqref{eq:4-0-3} and noting the definition of $n_i^{e_T}$ \eqref{eq:1-19}, we have 
\begin{multline}\label{eq:4-1-2}
f_0^{(n)} \bigl( \iota_{n, \lvert\sigma^\times\rvert}((1^{n_1^x}2^{n_2^x}\cdots))_{x\in T} 
\bigr) = \Bigl( \prod_{x\in T} f_0^{(n)} (\iota_{n,1} (1)_x )^{n_1^x}\Bigr) 
\Bigl( \prod_{x\in T} f_0^{(n)} (\iota_{n,2} (2)_x )^{n_2^x}\Bigr) \cdots \\ 
+ o\bigl( n^{- \frac{\lvert\sigma^\times\rvert -l(\sigma^\times)}{2}} \bigr) 
\qquad (n\to\infty).
\end{multline}
Here we have 
\begin{align}\label{eq:4-1-3}
&\frac{\lvert\sigma^\times\rvert -l(\sigma^\times)}{2} = 
\frac{\lvert\sigma\rvert - 2l(\sigma)}{2} = \frac{k}{2} - l(\sigma), \notag \\ 
&\text{hence } \qquad  
n^{-\frac{k}{2}} n^{k-l(\sigma)} 
o\bigl( n^{- \frac{\lvert\sigma^\times\rvert -l(\sigma^\times)}{2}} \bigr) = o(1) 
\quad (n\to\infty).
\end{align}
Combining \eqref{eq:4-1-2}, \eqref{eq:4-1-3} and \eqref{eq:1-31}, we get 
\begin{align}
&\lim_{n\to\infty} n^{-\frac{k}{2}} n^{k-l(\sigma)} 
f_0^{(n)} \bigl( \iota_{n, \lvert\sigma^\times\rvert}((1^{n_1^x}2^{n_2^x}\cdots))_{x\in T} 
\bigr) \notag \\ 
&= \lim_{n\to\infty} 
\prod_{x\in T} \Bigl( n^{\frac{1-1}{2}} f_0^{(n)} (\iota_{n,1} (1)_x )\Bigr)^{n_1^x}
\prod_{x\in T} \Bigl( n^{\frac{2-1}{2}} f_0^{(n)} (\iota_{n,2} (2)_x )\Bigr)^{n_2^x}
\prod_{x\in T} \Bigl( n^{\frac{3-1}{2}} f_0^{(n)} (\iota_{n,3} (3)_x )\Bigr)^{n_3^x}
\cdots \notag \\ 
&= \prod_{x\in T} \bigl( (\gamma_2^x)^{n_1^x} (\gamma_3^x)^{n_2^x} 
(\gamma_4^x)^{n_3^x}\cdots \bigr).  
 \label{eq:4-1-4}
\end{align}
Then \eqref{eq:1-17} yields for $y\in T$ 
\begin{align}
&\lim_{n\to\infty} n^{-\frac{k}{2}} f_0^{(n)} \bigl( \mathrm{E} [ J_{n+1}^{\ k} 
(y, (n+1))] \bigr) \notag \\ 
&= \sum_{\sigma\in\overline{\mathbb{Y}}_k} 
\sum_{(n_1^x, n_2^x, \cdots)_{x\in T^\circ} :\; (\star)_y} \!\!\!\!\!\!\!
\lvert \mathrm{NC}(\sigma)\rvert \lvert T\rvert^{k-l(\sigma)} 
\binom{m_1(\sigma^\times)}{(n_1^x)_{x\in T}} 
\binom{m_2(\sigma^\times)}{(n_2^x)_{x\in T}} \cdots 
\prod_{x\in T} \bigl(  (\gamma_2^x)^{n_1^x} (\gamma_3^x)^{n_2^x} \cdots\bigr). 
\label{eq:4-1-5}
\end{align}
Recall  \eqref{eq:1-15} as 
\begin{equation}\label{eq:4-1-6}
M_k(\mathfrak{m}_{\lambda^\zeta}) = \frac{1}{\lvert T\rvert^k} \sum_{y\in T} 
\frac{\chi^\lambda( \mathrm{E} [ J_{n+1}^{\;k} (y, (n+1))])}{\dim\lambda} 
\, \overline{\zeta(y)}, \qquad \zeta\in \widehat{T}.
\end{equation}
We obtain from \eqref{eq:4-1-5} and \eqref{eq:4-1-6} 
\begin{align}
&\lim_{n\to\infty} n^{-\frac{k}{2}} \sum_{\lambda\in\mathbb{Y}_n(\widehat{T})} 
M_0^{(n)}(\lambda) M_k(\mathfrak{m}_{\lambda^\zeta}) 
= \lim_{n\to\infty} n^{-\frac{k}{2}} \frac{1}{\lvert T\rvert^k} \sum_{y\in T} 
f_0^{(n)} \bigl( \mathrm{E} [ J_{n+1}^{\ k} (y, (n+1))] \bigr)  \overline{\zeta(y)} 
\notag \\ 
&= \sum_{\sigma\in\overline{\mathbb{Y}}_k} 
\frac{\lvert \mathrm{NC}(\sigma)\rvert}{\lvert T\rvert^{l(\sigma)}} \sum_{y\in T} 
\sum_{(n_1^x, n_2^x, \cdots)_{x\in T^\circ} :\; (\star)_y}
\binom{m_1(\sigma^\times)}{(n_1^x)_{x\in T}} 
\binom{m_2(\sigma^\times)}{(n_2^x)_{x\in T}} \cdots \notag \\ 
&\qquad\qquad\qquad\qquad\qquad \prod_{x\in T}\bigl( (\gamma_2^x)^{n_1^x}(\gamma_3^x)^{n_2^x} \cdots \bigr) 
\prod_{x\in T}\overline{\zeta(x)}^{n_1^x+n_2^x+\cdots} \notag \\
&= \sum_{\sigma\in\overline{\mathbb{Y}}_k} 
\frac{\lvert \mathrm{NC}(\sigma)\rvert}{\lvert T\rvert^{l(\sigma)}}
\sum_{(n_1^x, n_2^x, \cdots)_{x\in T} : \, \sum_{x\in T} n_1^x = m_1(\sigma^\times), 
\, \sum_{x\in T}n_2^x = m_2(\sigma^\times), \, \cdots} \notag \\ 
&\qquad\qquad\qquad\qquad\qquad\binom{m_1(\sigma^\times)}{(n_1^x)_{x\in T}} 
\prod_{x\in T} (\gamma_2^x \overline{\zeta(x)})^{n_1^x} \, 
\binom{m_2(\sigma^\times)}{(n_2^x)_{x\in T}}
\prod_{x\in T} (\gamma_3^x \overline{\zeta(x)})^{n_2^x} \, \cdots \notag \\ 
&= \sum_{\sigma\in\overline{\mathbb{Y}}_k} \lvert \mathrm{NC}(\sigma)\rvert 
\Bigl( \frac{1}{\lvert T\rvert} 
\sum_{x\in T} \gamma_2^x \overline{\zeta(x)}\Bigr)^{m_2(\sigma)} 
\Bigl( \frac{1}{\lvert T\rvert} 
\sum_{x\in T} \gamma_3^x \overline{\zeta(x)}\Bigr)^{m_3(\sigma)} \cdots.
\label{eq:4-1-7}
\end{align}
For any $\zeta\in\widehat{T}$ the leftmost side of \eqref{eq:4-1-7} 
\[ 
M_{0, k}^\zeta := \lim_{n\to\infty} \mathbb{E}_{M_0^{(n)}} \bigl[ 
M_k(\mathfrak{m}_{(\lambda^\zeta)^{\sqrt{n}}}) \bigr], \qquad 
k\in\mathbb{N}
\] 
gives a moment sequence $\{M_{0,k}^\zeta\}_{k=0,1,2,\dots}$ since it consists of limits 
of convex combinations of moments of a probability (i.e. it satisfies the condition of 
nonnegativity of the Hankel matrix). 
Hence there exists probability $\mathfrak{m}(0)^\zeta$ on $\mathbb{R}$ such that 
\begin{equation}\label{eq:4-1-9}
M_k( \mathfrak{m}(0)^\zeta) = M_{0,k}^\zeta, \qquad k\in\{0,1,2,\dots\}.
\end{equation}
Since \eqref{eq:1-32} and \eqref{eq:4-1-7} imply $(M_{0,k}^\zeta)$ of \eqref{eq:4-1-9} 
fulfills Carleman's condition, $\mathfrak{m}(0)^\zeta$ is uniquely determined. 
Also \eqref{eq:4-1-7} tells us that the free cumulants of $\mathfrak{m}(0)^\zeta$ are 
given by 
\[ 
R_1(\mathfrak{m}(0)^\zeta) =0, \qquad R_k(\mathfrak{m}(0)^\zeta) = 
\frac{1}{\lvert T\rvert} \sum_{x\in T} \gamma_k^x \overline{\zeta(x)} \quad 
(k\geqq 2). 
\] 
Note that $\sum_{x\in T} \gamma_k^x \overline{\zeta(x)}$ is real since 
$f_0^{(n)}(\iota_{n,k}(k)_{x^{-1}}) = \overline{f_0^{(n)}(\iota_{n,k}(k)_x)}$ holds. 
Let $\omega(0)^\zeta \in \mathscr{D}$ be the Markov transform of $\mathfrak{m}(0)^\zeta$. 
We have thus proved the following. 

\begin{proposition}\label{prop:4-1}
Under the initial condition \eqref{eq:1-31}--\eqref{eq:1-32} of Theorem~\ref{th:1-3}, 
there exists a unique \ 
$\omega(0) = (\omega(0)^\zeta)_{\zeta\in\widehat{T}}$, \ 
$\omega(0)^\zeta\in \mathscr{D}$, \  such that 
\[ 
M_k (\mathfrak{m}_{\omega(0)^\zeta}) = \lim_{n\to\infty} 
\mathbb{E}_{M_0^{(n)}} \bigl[ 
M_k (\mathfrak{m}_{(\lambda^\zeta)^{\sqrt{n}}}) \bigr], \qquad 
k\in\mathbb{N}, \ \zeta\in\widehat{T} 
\] 
holds and the free cumulants $R_k (\mathfrak{m}_{\omega(0)^\zeta})$ are given 
by \eqref{eq:1-35}.
\end{proposition}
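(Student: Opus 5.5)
The statement is essentially a summary of the computation just carried out, so the proof consists of assembling the steps in the right order and checking the analytic points (positivity, uniqueness, passage to a continuous diagram).

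First, we use Theorem~\ref{th:1-1} in the abelian form \eqref{eq:4-1-6}, average against $M_0^{(n)}$ and rewrite the average via \eqref{eq:1-25}. This gives
\[
\mathbb{E}_{M_0^{(n)}}\bigl[M_k(\mathfrak{m}_{(\lambda^\zeta)^{\sqrt{n}}})\bigr]
= n^{-k/2}\lvert T\rvert^{-k}\sum_{y\in T} f_0^{(n)}\bigl(\mathrm{E}[J_{n+1}^{\ k}(y,(n+1))]\bigr)\,\overline{\zeta(y)} .
\]
We then insert the expansion \eqref{eq:1-17} of Theorem~\ref{th:1-2}.

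The extra terms $N_\sigma$ are controlled by \eqref{eq:1-20} together with \eqref{eq:4-0-3}. Their number is $O(n^{k-l(\sigma)-1})$. Each $g\in N_\sigma$ has type obtained from $\sigma^\times$ by possibly deleting some one-box rows, so $\lvert\mathrm{type}(g)\rvert-l(\mathrm{type}(g))=k-2l(\sigma)$ and $f_0^{(n)}(g)=O(n^{-(k/2-l(\sigma))})$. After multiplying by $n^{-k/2}$, their total contribution is $O(n^{-1})$.

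For the main terms, AFP \eqref{eq:1-28} gives the approximate factorization \eqref{eq:4-1-2}. By \eqref{eq:4-1-3} its error is $o(1)$ after scaling, and \eqref{eq:1-31} yields the limit \eqref{eq:4-1-4}.

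The key algebraic step is the one in \eqref{eq:4-1-7}. Summing over $y$ against $\overline{\zeta(y)}$ releases the constraint $\prod x^{n^x_1+\cdots}=y$ in $(\star)_y$. Because $\zeta$ is a character, $\overline{\zeta(y)}=\prod_x\overline{\zeta(x)}^{\,n^x_1+n^x_2+\cdots}$. The factor $\lvert T\rvert^{k-l(\sigma)}/\lvert T\rvert^{k}$ is distributed as one factor $\lvert T\rvert^{-1}$ per row of $\sigma^\times$. The multinomial theorem, applied row-length by row-length with $m_i(\sigma^\times)=m_{i+1}(\sigma)$, then produces
\[
\sum_{\sigma\in\overline{\mathbb{Y}}_k}\lvert\mathrm{NC}(\sigma)\rvert\prod_{j\ge2}R_j^{\,m_j(\sigma)},\qquad R_j:=\lvert T\rvert^{-1}\sum_{x}\gamma_j^x\overline{\zeta(x)} .
\]
By the moment--free cumulant formula, this is exactly the $k$th moment of a measure with $R_1=0$ and $R_j$ as above. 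It also shows that these $R_j$ are real, since $f_0^{(n)}(\iota_{n,k}(k)_{x^{-1}})=\overline{f_0^{(n)}(\iota_{n,k}(k)_x)}$.

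For existence, the limit sequence $M^\zeta_{0,k}$ is a pointwise limit of convex combinations of moment sequences of probabilities. Hence its Hankel matrices are positive semidefinite and it is the moment sequence of some probability $\mathfrak{m}(0)^\zeta$.

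For uniqueness, we need Carleman's condition. From \eqref{eq:1-32}, $\lvert R_j\rvert\le (Cj)^j$. Using $\lvert\mathrm{NC}(k)\rvert\le4^k$, we get $\lvert M^\zeta_{0,k}\rvert\le 4^k\max_\sigma\prod_j(Cj)^{j m_j(\sigma)}\le(C'k)^k$. Then $\sum_k M_{2k}^{-1/(2k)}\ge\sum c/k=\infty$, so the moment problem is determinate.

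Finally, we apply the Markov--Krein correspondence (Kerov), which identifies probabilities on $\mathbb{R}$ with continuous diagrams in $\mathscr{D}$ up to the normalization $R_1=0$. This gives $\omega(0)^\zeta$ with $\mathfrak{m}_{\omega(0)^\zeta}=\mathfrak{m}(0)^\zeta$. Uniqueness of $\omega(0)$ follows from the injectivity of $\omega\mapsto\mathfrak{m}_\omega$ together with the determinacy just shown.

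The main obstacle I expect is the growth estimate behind Carleman's condition. The sum over $\sigma$ is finite, but its cardinality and the factorial-type growth of $R_j$ must be combined carefully. A crude bound $(Ck)^k$ is enough for determinacy, so I would aim for exactly that.
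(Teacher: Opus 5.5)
Your proposal is correct and follows essentially the same route as the paper's proof. Both use Theorem~\ref{th:1-1} in the form \eqref{eq:4-1-6}, the expansion \eqref{eq:1-17} together with AFP and \eqref{eq:4-0-3}, the character/multinomial summation giving \eqref{eq:4-1-7}, Hankel positivity for existence, Carleman's condition from \eqref{eq:1-32} for determinacy, and the Markov transform to obtain $\omega(0)^\zeta$.

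You make explicit two estimates the paper leaves implicit: the $O(n^{-1})$ contribution of the $N_\sigma$ terms, and the $(Ck)^k$ moment bound. One small correction: the Markov--Krein correspondence is already a bijection between all probabilities on $\mathbb{R}$ and $\mathscr{D}$, so no normalization is involved; $R_1=0$ only says that $\omega(0)^\zeta$ is centered.
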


\noindent\textbf{Remark} \ 
Proposition~\ref{prop:4-1} enables us to say we have captured the initial 
averaged limit shape $\omega(0)$.

\subsubsection{$t\geqq 0$ (any macroscopic time)} 
We show the following for $\{f_{tn}^{(n)}\}_{n\in\mathbb{N}}$: 
\begin{itemize}
\item AFP \eqref{eq:1-28} 
\item existence of limits for $(x, k)$-cycles analogous to \eqref{eq:1-31} and \eqref{eq:1-32}.
\end{itemize}
Then, similar computation to the case of $t=0$ will proceed. 

Properties of stochastic process $(X_s^{(n)})_{s\geqq 0}$ of \eqref{eq:1-24} produced 
by the Res-Ind chain are stated in our previous works. 
Recall stochastic matrix $P^{(n)}$ of \eqref{eq:1-22} and pausing time distribution $\psi$. 
Propositions~\ref{prop:4-2}, \ref{prop:4-3} and \ref{prop:4-4} below are valid for 
general (non-abelian) $T$. 

\begin{proposition}\label{prop:4-2}
For $\lambda, \mu \in \mathbb{Y}_n(\widehat{T})$, we have 
\begin{align}
&\mathbb{P} (X_s^{(n)} =\mu \,\vert\, X_0^{(n)} =\lambda) = \sum_{j=0}^\infty 
(P^{(n) j})_{\lambda\mu} \int_{[0, s]} \psi\bigl( (s-u, \infty)\bigr) 
\psi^{\ast j} (du),  \notag \\ 
&\text{therefore} \quad 
M_s^{(n)}(\lambda) = \sum_{j=0}^\infty (M_0^{(n)} P^{(n) j})_\lambda 
\int_{[0, s]} \psi\bigl( (s-u, \infty)\bigr) \psi^{\ast j} (du). \label{eq:4-1-13}
\end{align}
\end{proposition}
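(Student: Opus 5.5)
The plan is to condition on the counting process $(N_s)_{s\geqq 0}$. This process is built from the IID pausing times $\tau_1, \tau_2, \dots$ with distribution $\psi$, and it is independent of the embedded Res-Ind chain $(Z^{(n)}_k)_k$. Write $S_j := \tau_1+\cdots+\tau_j$, with $S_0 := 0$, and let $N_s := \max\{ j \,\vert\, S_j \leqq s\}$. This is consistent with $N_0 \equiv 0$ a.s., since $\psi$ lives on $(0,\infty)$. By \eqref{eq:1-24} and this independence, we get
\[
\mathbb{P}(X_s^{(n)} =\mu \,\vert\, X_0^{(n)} =\lambda) = \sum_{j=0}^\infty \mathbb{P}(Z^{(n)}_j = \mu \,\vert\, Z^{(n)}_0 =\lambda)\, \mathbb{P}(N_s = j) = \sum_{j=0}^\infty (P^{(n) j})_{\lambda\mu}\, \mathbb{P}(N_s =j).
\]
The Markov property of the chain with transition matrix $P^{(n)}$ of \eqref{eq:1-22} gives the $j$-step transition probability $(P^{(n)j})_{\lambda\mu}$.

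The remaining step is to identify $\mathbb{P}(N_s = j)$. The event $\{N_s = j\}$ coincides with $\{S_j \leqq s < S_j + \tau_{j+1}\}$. The variable $S_j$ has law $\psi^{\ast j}$, with $\psi^{\ast 0} = \delta_0$, and $\tau_{j+1}$ is independent of $S_j$. Conditioning on $S_j = u$ and applying Fubini, I expect
\[
\mathbb{P}(N_s = j) = \int_{[0,s]} \mathbb{P}(\tau_{j+1} > s-u)\, \psi^{\ast j}(du) = \int_{[0,s]} \psi\bigl((s-u,\infty)\bigr)\, \psi^{\ast j}(du).
\]
This yields the first formula. Exchanging the sum and the integral is harmless because every term is nonnegative and $\sum_j \mathbb{P}(N_s=j)=1$. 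Here I use that $N_s<\infty$ a.s., which holds because $S_j\to\infty$ a.s. by the strong law of large numbers, or simply because $\tau_i>0$ are IID.

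For \eqref{eq:4-1-13}, multiply the first formula by $M_0^{(n)}(\lambda) = \mathbb{P}(X_0^{(n)}=\lambda)$ and sum over $\lambda\in\mathbb{Y}_n(\widehat{T})$. This is a finite sum, so it interchanges freely with the series, and it gives $\sum_j (M_0^{(n)}P^{(n)j})_\mu \int_{[0,s]}\psi((s-u,\infty))\,\psi^{\ast j}(du)$; renaming $\mu$ as $\lambda$ gives the stated form. The only point requiring some care is the independence of $(N_s)$ from $(Z^{(n)}_k)$ together with the convention $\psi^{\ast 0}=\delta_0$. This is built into the construction of the process, so I expect no genuine obstacle.
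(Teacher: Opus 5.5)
Your argument is correct. You condition on $N_s$, use the independence of the counting process from the Res-Ind chain, and apply the renewal identity $\{N_s=j\}=\{S_j\leqq s<S_j+\tau_{j+1}\}$, with $\psi^{\ast 0}=\delta_0$ and the right-continuous convention matching $[0,s]$; this gives exactly the stated formulas. The paper gives no proof of its own and simply cites \cite[(1.6)]{Hor25} and \cite[(1.6)]{Hor20}, so there is no in-paper argument to compare with, but yours is the standard derivation of this formula.
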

See \cite[(1.6)]{Hor25} and \cite[(1.6)]{Hor20}.

\begin{proposition}\label{prop:4-3}
Eigenvectors of $P^{(n)}$ are given by irreducible characters of $\mathfrak{S}_n(T)$: 
\begin{equation}\label{eq:4-1-14}
P^{(n)} \Bigl( \frac{\chi^\lambda_{\iota_{n,k}\rho}}{\dim\lambda} 
\Bigr)_{\lambda\in\mathbb{Y}_n(\widehat{T})} 
= \Bigl(1- \frac{k-m_1(\rho_{\{e_T\}})}{n}\Bigr) 
\Bigl( \frac{\chi^\lambda_{\iota_{n,k}\rho}}{\dim\lambda} 
\Bigr)_{\lambda\in\mathbb{Y}_n(\widehat{T})}, \qquad 
\rho\in\mathbb{Y}_k([T]).
\end{equation}
\end{proposition}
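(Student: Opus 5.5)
The plan is to prove \eqref{eq:4-1-14} by expressing $P^{(n)}$ as the action of a central element of $\mathbb{C}[\mathfrak{S}_n(T)]$ in the Fourier picture, and then evaluating that action on normalized characters. Recall that \eqref{eq:1-21} and \eqref{eq:1-22} mean $P^{(n)}_{\lambda\mu}\dim\lambda/\dim\mu$ equals the multiplicity of $\pi^\mu$ in $\mathrm{Ind}\circ\mathrm{Res}\,\pi^\lambda$, divided by $n\lvert T\rvert$. Consequently, for any central function $F$ on $\mathfrak{S}_n(T)$ we have
\[
\sum_\mu P^{(n)}_{\lambda\mu}\frac{\chi^\mu(F)}{\dim\mu}
=\frac{1}{n\lvert T\rvert\dim\lambda}\,\bigl(\mathrm{Ind}\,\mathrm{Res}\,\chi^\lambda\bigr)(F).
\]
I will apply this with $F$ the indicator of the class $C_{\iota_{n,k}\rho}$, or equivalently evaluate at a single element $g$ of type $\iota_{n,k}\rho$.

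First step: apply Frobenius' formula for induced characters from $H=\mathfrak{S}_{n-1}(T)$ to $G=\mathfrak{S}_n(T)$. It gives
\[
(\mathrm{Ind}\,\mathrm{Res}\,\chi^\lambda)(g)=\frac{1}{\lvert H\rvert}\sum_{a\in G,\ a^{-1}ga\in H}\chi^\lambda(a^{-1}ga)=\chi^\lambda(g)\,\frac{\#\{a\in G : a^{-1}ga\in H\}}{\lvert H\rvert}.
\]
The second equality uses that $\chi^\lambda$ is a class function of $G$. So the eigenvalue is
\[
\frac{\#\{a : a^{-1}ga\in H\}}{n\lvert T\rvert\,\lvert H\rvert}=\frac{\#\{a : a^{-1}ga\in H\}}{\lvert G\rvert}.
\]
This is the proportion of conjugates of $g$ lying in $\mathfrak{S}_{n-1}(T)$. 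Equivalently, it is the probability that, for a uniformly random conjugate $g'$ of $g$, the point $n$ is not in $\mathrm{supp}\,g'$.

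Second step, the counting. An element $g'\in\mathfrak{S}_n(T)$ lies in $H$ exactly when $n$ is a fixed point of its permutation part and carries $e_T$. For $g$ of type $\iota_{n,k}\rho$, the points outside $\mathrm{supp}\,g$ are the $n-k$ added fixed points together with the $m_1(\rho_{\{e_T\}})$ trivial $1$-cycles of $\rho$. Hence $\lvert\mathrm{supp}\,g\rvert=k-m_1(\rho_{\{e_T\}})$.

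Conjugating by $\mathfrak{S}_n\subset\mathfrak{S}_n(T)$ permutes positions transitively, while conjugating by $T^n$ preserves the support. Therefore, under uniform random conjugation, $\mathrm{supp}\,g'$ is a uniformly random subset of size $\lvert\mathrm{supp}\,g\rvert$. It follows that $\mathbb{P}(n\notin\mathrm{supp}\,g')=1-(k-m_1(\rho_{\{e_T\}}))/n$, which is the claimed eigenvalue.

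The only delicate point is the claim that the support of a uniform conjugate is uniformly distributed. This follows because the conjugacy class is an orbit under $G$ and the map $g'\mapsto\mathrm{supp}\,g'$ is $\mathfrak{S}_n$-equivariant. That makes the fibres over all subsets of a given size equinumerous.
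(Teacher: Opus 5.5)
Your proof is correct. Writing $\mathrm{Ind}\,\mathrm{Res}\,\chi^\lambda=\chi^\lambda\cdot\mathrm{Ind}_{\mathfrak{S}_{n-1}(T)}^{\mathfrak{S}_n(T)}1$ reduces the eigenvalue to the normalized permutation character of $g$ on $\mathfrak{S}_n(T)/\mathfrak{S}_{n-1}(T)\cong\{1,\dots,n\}\times T$, namely $\lvert T\rvert\,(n-\lvert\mathrm{supp}\,g\rvert)/(n\lvert T\rvert)=1-(k-m_1(\rho_{\{e_T\}}))/n$, and your uniform-support count is a hands-on way of counting these fixed points; the paper gives no proof in the text and only cites earlier works for this identity, so your argument supplies the standard derivation rather than a different route.
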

See \cite[(2.7)]{Hor25}, \cite[(3.3)]{Hor15} and \cite[(5.13)]{Hor16}. 

\medskip

For $\rho\in\mathbb{Y}_k([T])$ such that $m_1(\rho_{\{e_T\}}) =0$, 
\eqref{eq:4-1-13} and \eqref{eq:4-1-14} yield 
\begin{align*}
f_s^{(n)} (\iota_{n,k}\rho) &= \sum_{\lambda\in\mathbb{Y}_n(\widehat{T})} 
M_s^{(n)}(\lambda) \frac{\chi^\lambda_{\iota_{n,k}\rho}}{\dim\lambda} \\
&= \sum_{j=0}^\infty \Bigl( \int_{[0, s]} \psi\bigl( (s-u, \infty)\bigr) \psi^{\ast j} (du)\Bigr) 
M_0^{(n)} P^{(n) j} \Bigl( \frac{\chi^\lambda_{\iota_{n,k}\rho}}{\dim\lambda} 
\Bigr)_\lambda \\
&= a(k,n,s) f_0^{(n)}(\iota_{n,k}\rho) 
\end{align*}
where we set 
\[ 
a(k,n,s) = \sum_{j=0}^\infty \bigl( 1- \frac{k}{n}\bigr)^j 
\int_{[0, s]} \psi\bigl( (s-u, \infty)\bigr) \psi^{\ast j} (du).
\] 
We thus have for any $t\geqq 0$ and $k\in\mathbb{N}$ 
\begin{equation}\label{eq:4-1-17}
f_{tn}^{(n)} (\iota_{n,k}\rho) = a(k,n,tn) f_0^{(n)} (\iota_{n,k}\rho), \qquad 
\rho\in\mathbb{Y}_k([T]) : \ m_1(\rho_{\{e_T\}}) =0.
\end{equation}
Under the assumption of \eqref{eq:1-33}, 
\begin{equation}\label{eq:4-1-18}
\lim_{n\to\infty} a(k,n,tn) = e^{-kt/m}, \qquad t\geqq 0, \ k\in\mathbb{N} 
\end{equation}
holds. 
See \cite[Proposition 3.2 (1)]{Hor25} and \cite[Proposition 2.1]{Hor20}. 

\begin{proposition}\label{prop:4-4}
The sequence $\{f_{tn}^{(n)}\}_{n\in\mathbb{N}}$ fulfills AFP.
\end{proposition}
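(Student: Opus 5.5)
The plan is to reduce AFP at time $tn$ to AFP at time $0$ via \eqref{eq:4-1-17} and \eqref{eq:4-1-18}. Take $g, h\in\mathfrak{S}_\infty(T)$ with disjoint supports, and set $\rho=\mathrm{type}(g)$, $\rho'=\mathrm{type}(h)$. Then $\mathrm{type}(gh)=\rho\sqcup\rho'$ (the union of the cycles). Each of these has no $1$-box rows in its $\{e_T\}$-entry, by \eqref{eq:1-26.5}. Write $k=\lvert\rho\rvert$ and $l=\lvert\rho'\rvert$, so that $\lvert\mathrm{type}(gh)\rvert=k+l$. Since $f^{(n)}_{tn}$ is central, its values on $g$, $h$, $gh$ equal its values on $\iota_{n,k}\rho$, $\iota_{n,l}\rho'$, $\iota_{n,k+l}(\rho\sqcup\rho')$. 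Then \eqref{eq:4-1-17} gives
\[
f^{(n)}_{tn}(gh)-f^{(n)}_{tn}(g)f^{(n)}_{tn}(h)
= a(k+l,n,tn)\bigl(f_0^{(n)}(gh)-f_0^{(n)}(g)f_0^{(n)}(h)\bigr)
+\bigl(a(k+l,n,tn)-a(k,n,tn)a(l,n,tn)\bigr)f_0^{(n)}(g)f_0^{(n)}(h).
\]

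For the first term, note that $0\le a(k,n,s)\le \sum_j\int_{[0,s]}\psi((s-u,\infty))\psi^{\ast j}(du)=1$; this total is the probability that some number $j$ of renewals has occurred by time $s$. So $a(k+l,n,tn)$ is bounded, and the initial AFP \eqref{eq:1-28} makes the first term $o\bigl(n^{-\frac12(\lvert\rho\rvert-l(\rho)+\lvert\rho'\rvert-l(\rho'))}\bigr)$.

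For the second term, \eqref{eq:4-1-18} gives $a(k+l,n,tn)\to e^{-(k+l)t/m}$ and $a(k,n,tn)a(l,n,tn)\to e^{-kt/m}e^{-lt/m}$. These limits coincide, so the coefficient is $o(1)$. Meanwhile \eqref{eq:4-0-3}, which follows from the initial AFP and \eqref{eq:1-31}, gives $f_0^{(n)}(g)f_0^{(n)}(h)=O\bigl(n^{-\frac12(\lvert\rho\rvert-l(\rho)+\lvert\rho'\rvert-l(\rho'))}\bigr)$. The product is therefore of the required little-$o$ order. This also shows that the error term depends only on the types, as the definition requires.

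The main point to check is that \eqref{eq:4-1-17} applies with the eigenvalue exponent equal to the full size $k$, which needs $m_1(\rho_{\{e_T\}})=0$. This holds exactly because types in $\mathbb{Y}([T])$ have no trivial $1$-cycles; $1$-cycles carrying nontrivial $x\in T$ are allowed and do count in $k$. A second point is that \eqref{eq:4-1-18} is needed for each fixed $k$, and only finitely many $k$ enter ($k$, $l$, $k+l$), so no uniformity is required. If \eqref{eq:4-1-18} were only available for $k\ge 2$, I would instead argue directly from the renewal representation of $a$, but as stated it covers all $k\in\mathbb{N}$.
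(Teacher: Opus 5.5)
Your argument is correct and is essentially the paper's proof. You use the same splitting via \eqref{eq:4-1-17} into $a(k+l,n,tn)$ times the initial AFP error plus $\bigl(a(k+l,n,tn)-a(k,n,tn)a(l,n,tn)\bigr)f_0^{(n)}(g)f_0^{(n)}(h)$, controlled by \eqref{eq:4-1-18} and \eqref{eq:4-0-3}; your remark that $0\le a(k,n,s)\le 1$ (valid once $n\geqq k$, as a sum of the renewal probabilities $\mathbb{P}(N_s=j)$) just makes explicit a boundedness the paper takes from \eqref{eq:4-1-18}.
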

\textit{Proof} \ 
Let $g, h \in \mathfrak{S}_\infty(T)$ satisfy 
$\mathrm{supp}\,g \cap \mathrm{supp}\,h = \varnothing$, and set 
$\mathrm{type}(g) =\rho$, $\mathrm{type}(h) =\sigma$. 
Then, $\mathrm{type}(gh) = \rho\sqcup\sigma$. 
By \eqref{eq:4-1-17} and AFP for $\{f_0^{(n)}\}$, we have 
\begin{align*}
&f_{tn}^{(n)}(\iota_{n,\lvert\rho\rvert + \lvert\sigma\rvert}(\rho\sqcup\sigma)) 
- f_{tn}^{(n)}(\iota_{n,\lvert\rho\rvert}\rho) 
f_{tn}^{(n)}(\iota_{n, \lvert\sigma\rvert}\sigma) \\ 
&= \{ a(\lvert\rho\rvert + \lvert\sigma\rvert, n, tn) - 
a(\lvert\rho\rvert, n, tn) a(\lvert\sigma\rvert, n, tn) \} 
f_0^{(n)}(\iota_{n,\lvert\rho\rvert}\rho) f_0^{(n)}(\iota_{n, \lvert\sigma\rvert}\sigma) \\ 
&\qquad + a(\lvert\rho\rvert + \lvert\sigma\rvert, n, tn) 
o\bigl( n^{-\frac{1}{2}(\lvert\rho\rvert- l(\rho) + \lvert\sigma\rvert -l(\sigma))}\bigr) 
\qquad (n\to\infty), 
\end{align*}
which is 
$o\bigl( n^{-\frac{1}{2}(\lvert\rho\rvert- l(\rho) + \lvert\sigma\rvert -l(\sigma))}\bigr)$ 
by \eqref{eq:4-1-18} and \eqref{eq:4-0-3}.
\hfill $\blacksquare$

\medskip

For the value at $(x,k)$-cycle, \eqref{eq:4-1-17}, \eqref{eq:4-1-18} and \eqref{eq:1-31} 
yield 
\begin{equation}\label{eq:4-1-20}
\lim_{n\to\infty} n^{\frac{k-1}{2}} f_{tn}^{(n)} (\iota_{n,k} (k)_x) = 
e^{-kt/m} \gamma_{k+1}^x
\end{equation}
for $x\in T^\circ$, $k\in\mathbb{N}$ or $x=e_T$, $k\geqq 2$. 
If $x=e_T$ and $k=1$, the left hand side of \eqref{eq:4-1-20} is equal to 
$f_{tn}^{(n)}(e) = 1 = \gamma_2^{e_T}$. 
Under \eqref{eq:1-32} we have 
\begin{equation}\label{eq:4-1-21}
\sup_{k\geqq 2} \frac{1}{k}\, \lvert e^{-kt/m} \gamma_{k+1}^x\rvert^{1/k} 
< \infty.
\end{equation}
We recall the computation in 4.1.1 while we replace $\gamma_{k+1}^x$ by 
$e^{-kt/m}\gamma_{k+1}^x$ according to \eqref{eq:4-1-20} and, however, 
keep $\gamma_2^{e_T}$ for $x=e_T$ and $k=1$. 
The expression of \eqref{eq:4-1-4} is hence modified as 
\begin{multline*}
\lim_{n\to\infty} n^{-\frac{k}{2}} n^{k-l(\sigma)} 
f_{tn}^{(n)}\bigl( \iota_{n, \lvert\sigma^\times\rvert}((1^{n_1^x}2^{n_2^x}\cdots))_{x\in T} 
\bigr) \\ = \prod_{x\in T^\circ} (e^{-\frac{t}{m}} \gamma_2^x)^{n_1^x} 
\prod_{x\in T} (e^{-\frac{2t}{m}} \gamma_3^x)^{n_2^x} 
(e^{-\frac{3t}{m}} \gamma_4^x)^{n_3^x} \cdots. 
\end{multline*}
Then \eqref{eq:4-1-5} and \eqref{eq:4-1-7} turn into 
\begin{align}
&\lim_{n\to\infty} n^{-\frac{k}{2}} f_{tn}^{(n)} \bigl( \mathrm{E}[ 
J_{n+1}^{\ k} (y, (n+1))]\bigr) \notag\\ 
&= \sum_{\sigma\in\overline{\mathbb{Y}}_k} 
\sum_{(n_1^x, n_2^x, \cdots)_{x\in T^\circ} : \, (\star)_y} 
\lvert\mathrm{NC}(\sigma)\rvert\,\lvert T\rvert^{k-l(\sigma)}
\binom{m_1(\sigma^\times)}{(n_1^x)_{x\in T}} 
\binom{m_2(\sigma^\times)}{(n_2^x)_{x\in T}} \cdots \notag\\ 
&\qquad\qquad\qquad\prod_{x\in T^\circ} (e^{-\frac{t}{m}}\gamma_2^x )^{n_1^x} \, 
\prod_{x\in T} (e^{-\frac{2t}{m}}\gamma_3^x )^{n_2^x}
 (e^{-\frac{3t}{m}}\gamma_4^x )^{n_3^x}\cdots 
 \label{eq:4-1-23}
\end{align}
and 
\begin{align}
M_{t,k}^\zeta &= 
\lim_{n\to\infty} n^{-\frac{k}{2}} \sum_{\lambda\in\mathbb{Y}_n(\widehat{T})} 
M_{tn}^{(n)}(\lambda) M_k(\mathfrak{m}_{\lambda^\zeta}) \notag \\ 
&= \sum_{\sigma\in\overline{\mathbb{Y}}_k} 
\frac{\lvert\mathrm{NC}(\sigma)\rvert}{\lvert T\rvert^{l(\sigma)}} 
\sum_{(n_1^x, n_2^x, \cdots)_{x\in T} : \, \sum_{x\in T} n_1^x = m_1(\sigma^\times), 
\, \sum_{x\in T}n_2^x = m_2(\sigma^\times), \, \cdots} \notag \\ 
&\qquad\qquad\qquad\quad\binom{m_1(\sigma^\times)}{(n_1^x)_{x\in T}} 
\prod_{x\in T^\circ} (e^{-\frac{t}{m}}\gamma_2^x \overline{\zeta(x)})^{n_1^x} \, 
\binom{m_2(\sigma^\times)}{(n_2^x)_{x\in T}}
\prod_{x\in T} (e^{-\frac{2t}{m}}\gamma_3^x \overline{\zeta(x)})^{n_2^x} \, \cdots 
\notag \\ 
&= \sum_{\sigma\in\overline{\mathbb{Y}}_k} \lvert \mathrm{NC}(\sigma)\rvert 
\Bigl( \frac{1}{\lvert T\rvert} \bigl( 1+ 
\sum_{x\in T^\circ} e^{-\frac{t}{m}}\gamma_2^x \overline{\zeta(x)}\bigr) 
\Bigr)^{m_2(\sigma)} \Bigl( \frac{1}{\lvert T\rvert} 
\sum_{x\in T} e^{-\frac{2t}{m}}\gamma_3^x \overline{\zeta(x)}\Bigr)^{m_3(\sigma)} 
\cdots \label{eq:4-1-24}
\end{align}
respectively. 
Since \eqref{eq:4-1-21} implies Carleman's condition for $(M_{t,k}^\zeta)_k$ 
and make the moment problem determinate, probability $\mathfrak{m}(t)^\zeta$ on 
$\mathbb{R}$ is uniquely determined similarly to 4.1.1. 
Its free cumulants are read from \eqref{eq:4-1-24}: \ 
$R_1(\mathfrak{m}(t)^\zeta) =0$, 
\begin{align*}
R_2(\mathfrak{m}(t)^\zeta) &= \frac{1}{\lvert T\rvert} \bigl( 1+ 
\sum_{x\in T^\circ} e^{-\frac{t}{m}}\gamma_2^x \overline{\zeta(x)}\bigr) 
= \frac{1-e^{-\frac{t}{m}}}{\lvert T\rvert} + \frac{e^{-\frac{t}{m}}}{\lvert T\rvert} 
\sum_{x\in T} \gamma_2^x \overline{\zeta(x)},  \\ 
R_k(\mathfrak{m}(t)^\zeta) &= \frac{e^{-\frac{(k-1)t}{m}}}{\lvert T\rvert} 
\sum_{x\in T} \gamma_k^x \overline{\zeta(x)} \qquad (k\geqq 3). 
\end{align*}
Let $\omega(t)^\zeta$ be the Markov transform of $\mathfrak{m}(t)^\zeta$. 

\begin{proposition}\label{prop:4-5}
Under the assumptions of Theorem~\ref{th:1-3}, we have for any $t\geqq 0$ a unique 
limit shape $\omega(t) = (\omega(t)^\zeta)_{\zeta\in\widehat{T}}$, 
$\omega(t)^\zeta\in\mathscr{D}$, in an averaged sense such that 
\begin{equation}\label{eq:4-1-27}
M_k (\mathfrak{m}_{\omega(t)^\zeta}) = \lim_{n\to\infty} 
\mathbb{E}_{M_{tn}^{(n)}} \bigl[ M_k (\mathfrak{m}_{(\lambda^\zeta)^{\sqrt{n}}}) \bigr], 
\qquad k\in\mathbb{N}, \ \zeta\in\widehat{T}. 
\end{equation}
The free cumulants $R_k(\mathfrak{m}_{\omega(t)^\zeta})$ are given by \eqref{eq:1-36}.
\end{proposition}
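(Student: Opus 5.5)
The plan is to assemble Proposition~\ref{prop:4-5} from ingredients already in place, repeating the computation of 4.1.1 with $f_0^{(n)}$ replaced by $f_{tn}^{(n)}$.

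\emph{Step 1: the time-$t$ data satisfy the hypotheses of 4.1.1.} By Proposition~\ref{prop:4-4}, $\{f_{tn}^{(n)}\}$ satisfies AFP. By \eqref{eq:4-1-20}, the rescaled $(x,k)$-cycle values converge to $e^{-kt/m}\gamma^x_{k+1}$, except for $(x,k)=(e_T,1)$, where the value is identically $1=\gamma_2^{e_T}$. By \eqref{eq:4-1-21}, the growth bound analogous to \eqref{eq:1-32} holds. Hence the remark leading to \eqref{eq:4-0-3} applies to $f_{tn}^{(n)}$.

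\emph{Step 2: evaluate the main terms.} Apply $f_{tn}^{(n)}$ to the expansion \eqref{eq:1-17} of Theorem~\ref{th:1-2} and multiply by $n^{-k/2}$. For each $\sigma\in\overline{\mathbb{Y}}_k$, the extra terms $N_\sigma$ contribute $O(n^{k-l(\sigma)-1})$ elements, by \eqref{eq:1-20}. Each such element has type $\sigma^\times$, possibly with some $1$-box rows missing. Its $f_{tn}^{(n)}$-value is therefore $O(n^{-(k/2-l(\sigma))})$ by \eqref{eq:4-0-3} and \eqref{eq:4-1-3}. After the factor $n^{-k/2}$, this contribution vanishes. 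For the main terms, AFP in the form \eqref{eq:4-1-2} factorizes the value on the class $\iota_{n,|\sigma^\times|}((1^{n_1^x}2^{n_2^x}\cdots))_{x\in T}$ into a product of cycle values, with an error killed by \eqref{eq:4-1-3}. The factor $n^{k-l(\sigma)}$ then exactly supplies the rescalings $n^{(j-1)/2}$ of the cycle values. Inserting the limits from Step 1 gives \eqref{eq:4-1-23}.

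\emph{Step 3: pass to moments and identify free cumulants.} Average \eqref{eq:4-1-6} over $M_{tn}^{(n)}$; this turns character values into $f_{tn}^{(n)}$-values and gives $M^\zeta_{t,k}$. Summing over $y$ with the weights $\overline{\zeta(y)}$ removes the constraint $(\star)_y$, because $\zeta$ is a character of the abelian group $T$. The multinomial theorem then collapses the sum, row length by row length, to \eqref{eq:4-1-24}. The prefactor $|T|^{k-l(\sigma)}|T|^{-k}=|T|^{-l(\sigma)}$ distributes one factor $1/|T|$ to each block. The result is the moment--free cumulant formula $\sum_{\pi\in\mathrm{NC}(k)}\prod_{V\in\pi}R_{|V|}$, with $R_1=0$ (since $\sigma$ has no singletons) and the stated $R_j$; for $j=2$ these simplify to \eqref{eq:1-36}.

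\emph{Step 4: existence and uniqueness.} $M^\zeta_{t,k}$ is a limit of convex combinations of moments of probabilities, so its Hankel matrices are nonnegative and a probability $\mathfrak{m}(t)^\zeta$ with these moments exists. Carleman's condition follows from \eqref{eq:4-1-21}, since the free cumulants grow at most like $(Ck)^k$ and the moment bound comes from $|\mathrm{NC}(k)|\le 4^k$. This gives uniqueness, and the Markov--Krein correspondence then yields $\omega(t)^\zeta\in\mathscr{D}$.

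\emph{Main obstacle.} The only delicate point is the bookkeeping for $x=e_T$, $k=1$. These $1$-box rows carry the identity and give the value $1$ with no time decay. They must be split off in the multinomial sum, which produces the term $(1+\sum_{x\in T^\circ}e^{-t/m}\gamma_2^x\overline{\zeta(x)})/|T|$ in $R_2$. One also needs $\gamma_2^{e_T}=1$ to rewrite this as $\frac{1-e^{-t/m}}{|T|}+e^{-t/m}R_2(\mathfrak{m}_{\omega(0)^\zeta})$.
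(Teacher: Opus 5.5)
Your proposal is correct and follows the paper's own route. You establish AFP (Proposition~\ref{prop:4-4}), the limits \eqref{eq:4-1-20} and the bound \eqref{eq:4-1-21} for $f_{tn}^{(n)}$. You then rerun the $t=0$ computation \eqref{eq:4-1-5}--\eqref{eq:4-1-7} with $\gamma_{k+1}^x$ replaced by $e^{-kt/m}\gamma_{k+1}^x$, except for the $e_T$-labelled one-box rows, which keep the value $\gamma_2^{e_T}=1$. Existence then comes from Hankel positivity and uniqueness from Carleman's condition. Your explicit Carleman estimate via $\lvert\mathrm{NC}(k)\rvert\leqq 4^k$ and your bound on the $N_\sigma$ contributions only spell out steps the paper leaves implicit.
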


\subsection{$L^2$-convergence of moments of rescaled transition measures}
We show the convergence of Proposition~\ref{prop:4-5} is taken to be in $L^2$, from which 
we obtain wLLN in Theorem~\ref{th:1-3}.

\subsubsection{Variance of moments of Jucys--Murphy elements}
We estimate the (co)variance of moments of JM elements w.r.t $\{f^{(n)}\}$: 
\begin{multline*}
f^{(n)} \bigl( \mathrm{E}[ J_{n+1}^{\ k} (y, (n+1))] 
\mathrm{E}[ J_{n+1}^{\ k} (z, (n+1))] \bigr) \\ - 
f^{(n)}\bigl( \mathrm{E}[ J_{n+1}^{\ k} (y, (n+1))] \bigr) 
f^{(n)}\bigl( \mathrm{E}[ J_{n+1}^{\ k} (z, (n+1))] \bigr). 
\end{multline*}
For $\rho, \sigma\in\mathbb{Y}([T])$ (in \eqref{eq:1-26.5}) 
and $r\in \{0,1,\dots, \lvert\rho\rvert\wedge\lvert\sigma\rvert\}$, set 
\[ 
S_{\rho, \sigma}^{(n)}(r) = \bigl\{ (g, h)\in C_{\iota_{n,\lvert\rho\rvert}\rho} \times 
C_{\iota_{n,\lvert\sigma\rvert}\sigma} \,\big\vert\, 
\lvert \mathrm{supp}\,g \cap \mathrm{supp}\,h \rvert =r \bigr\}.
\] 
Obviously we have 
\begin{equation}\label{eq:4-2-3}
\lvert S_{\rho, \sigma}^{(n)}(r) \rvert = O( n^{\lvert\rho\rvert + \lvert\sigma\rvert -r}) 
\quad (n\to\infty).
\end{equation}

\begin{lemma}\label{lem:4-2-1}
If $(g, h) \in S_{\rho, \sigma}^{(n)}(r)$ for $r\geqq 1$, then we have 
\begin{equation}\label{eq:4-2-4}
\lvert \mathrm{type} (gh)\rvert - l(\mathrm{type} (gh)) \geqq 
\lvert\rho\rvert -l(\rho) + \lvert\sigma\rvert -l(\sigma) -2r +2.
\end{equation}
\end{lemma}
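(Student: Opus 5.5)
The plan is to reduce \eqref{eq:4-2-4} to a statement about the $\mathfrak{S}_n$-parts of $g$ and $h$ only, and then to apply the Riemann--Hurwitz (genus) inequality for a pair of permutations.

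\textit{Reduction to permutations.} Write $g=(\cdot)\tau_g$ and $h=(\cdot)\tau_h$ with $\tau_g,\tau_h\in\mathfrak{S}_n$. By \eqref{eq:1-26.5}, $\mathrm{type}(g)$ is obtained by deleting the $1$-box rows labeled by $\{e_T\}$. Rows of length $1$ with a nontrivial label contribute $1-1=0$ to $\lvert\mathrm{type}(g)\rvert-l(\mathrm{type}(g))$. Hence
\[
\lvert\mathrm{type}(g)\rvert-l(\mathrm{type}(g)) = \sum_{\text{cycles of }\tau_g}(\text{length}-1) = n-c(\tau_g),
\]
where $c(\cdot)$ is the number of cycles in $\mathfrak{S}_n$ (fixed points included). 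This quantity does not depend on the $T$-labels, and the same holds for $h$ and for $gh$, whose permutation part is $\tau_g\tau_h$. So only permutations need to be handled. The $T$-labels enter only through the supports $A:=\mathrm{supp}\,g$ and $B:=\mathrm{supp}\,h$, with $\lvert A\cap B\rvert=r\geqq 1$.

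\textit{Counting on $X=A\cup B$.} The permutation $\tau_g$ preserves $A$ and fixes $X\setminus A$ pointwise; likewise $\tau_h$ preserves $B$. Let $c_A$ be the number of cycles of $\tau_g$ on $A$ (fixed points in $A$ coming from labeled $1$-cycles included), and let $c_B$ be the analogous number for $\tau_h$ on $B$. Then $n-c(\tau_g)=\lvert A\rvert-c_A$, $n-c(\tau_h)=\lvert B\rvert-c_B$, and
\[
n-c(\tau_g\tau_h)=\lvert X\rvert-c_X(\tau_g\tau_h),\qquad \lvert X\rvert=\lvert A\rvert+\lvert B\rvert-r .
\]
Substituting, \eqref{eq:4-2-4} becomes equivalent to
\[
c_X(\tau_g\tau_h)\leqq c_A+c_B+r-2 .
\]

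\textit{Key inequality.} I would invoke the genus inequality: for permutations $x,y$ of a finite set $X$ with $o$ orbits of $\langle x,y\rangle$,
\[
c(x)+c(y)+c(xy)\leqq \lvert X\rvert+2o .
\]
This is Euler's formula for the associated (possibly disconnected) hypermap, whose genus is nonnegative on each orbit. On $X$ we have $c_X(\tau_g)=c_A+\lvert B\setminus A\rvert$ and $c_X(\tau_h)=c_B+\lvert A\setminus B\rvert$, and $\lvert A\setminus B\rvert+\lvert B\setminus A\rvert=\lvert X\rvert-r$. Therefore
\[
c_X(\tau_g\tau_h)\leqq 2o-c_A-c_B+r .
\]

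\textit{Orbits.} Every orbit of $\langle\tau_g,\tau_h\rangle$ on $X$ is a union of $\tau_g$-cycles in $A$, of $\tau_h$-cycles in $B$, and of the singletons of $X$ fixed by the other generator. These pieces are glued along shared points. Concretely, form a graph whose vertices are the $c_A$ cycles in $A$ and the $c_B$ cycles in $B$, and join two vertices whenever they share a point of $A\cap B$. The orbits in $X$ correspond to the connected components of this graph. Since $r\geqq 1$, there is at least one edge, so $o\leqq c_A+c_B-1$. Inserting this bound gives $c_X(\tau_g\tau_h)\leqq c_A+c_B+r-2$, as required.

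The main point needing care is the bookkeeping of supports. Points of $A$ fixed by $\tau_g$ but carrying a nontrivial label must be counted as cycles in $c_A$, so that $r$ counts wreath-support intersections rather than permutation-support intersections. The argument above does this consistently. Beyond that, the cited genus inequality is the only nontrivial input.
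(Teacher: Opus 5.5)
Your proof is correct. The first step matches the paper: it also observes that the $T^n$-part $dwd'w^{-1}$ of $gh$ does not change $\lvert\mathrm{type}(\cdot)\rvert - l(\mathrm{type}(\cdot))$, so the problem reduces to permutations. After that the routes differ.

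The paper simply invokes the symmetric-group case, \cite[Lemma~3.5]{Hor05}. You instead prove the permutation inequality yourself. You use the hypermap genus inequality $c(x)+c(y)+c(xy)\leqq\lvert X\rvert+2o$, and then bound the number of orbits by the incidence-graph count $o\leqq c_A+c_B-1$. That count is exactly where the hypothesis $r\geqq 1$ enters.

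A further advantage of your version is that it works with arbitrary sets $A\supseteq\mathrm{supp}\,\tau_g$ and $B\supseteq\mathrm{supp}\,\tau_h$. This automatically absorbs the mismatch between wreath supports and permutation supports, namely points fixed by the permutation but carrying a nontrivial label. The paper's one-line reduction leaves this point implicit. Let $r_0$ be the number of points where the permutation supports meet.
\begin{itemize}
\item If $r_0\geqq 1$, then $r_0\leqq r$, so the $\mathfrak{S}_n$ bound with $r_0$ is stronger than the one needed.
\item If $r_0=0$, the lengths simply add. This suffices because $r\geqq 1$.
\end{itemize}

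In short, the paper gains brevity by relying on an earlier result. Your argument is self-contained and makes the role of $r\geqq1$ transparent. The only outside input you need is the standard nonnegativity of the genus on each orbit of $\langle\tau_g,\tau_h\rangle$.
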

\textit{Proof} \ 
For $g =dw$, $h=d^\prime w^\prime \in \mathfrak{S}_n(T)$ 
($d, d^\prime\in T^n$; $w, w^\prime \in \mathfrak{S}_n$), the $T^n$-part 
$dwd^\prime w^{-1}$ of $gh$ does not affect 
$\lvert \mathrm{type} (~)\rvert - l(\mathrm{type} (~))$. 
Hence \eqref{eq:4-2-4} follows from the case of $\mathfrak{S}_n$ proved in 
\cite[Lemma~3.5]{Hor05}
\footnote{In \cite{Hor05}, $l$ denotes length function $\lvert~\rvert - l(~)$. }.
\hfill $\blacksquare$

\begin{lemma}\label{lem:4-2-1}
If $\{f^{(n)}\}$ satisfies \eqref{eq:4-0-3}, we have 
\begin{equation}\label{eq:4-2-5}
f^{(n)}\Bigl( 
\frac{A_{\iota_{n,\lvert\rho\rvert}\rho}}{\lvert C_{\iota_{n,\lvert\rho\rvert}\rho}\rvert}\, 
\frac{A_{\iota_{n,\lvert\sigma\rvert}\sigma}}{\lvert C_{\iota_{n,\lvert\sigma\rvert}\sigma}\rvert}
\Bigr) - 
f^{(n)}\bigl( \iota_{n, \lvert\rho\rvert +\lvert\sigma\rvert}(\rho\sqcup\sigma)\bigr) 
= O\bigl( n^{-\frac{1}{2}(\lvert\rho\rvert-l(\rho)+\lvert\sigma\rvert- l(\sigma))-1}\bigr)
\end{equation}
as $n\to\infty$ for $\rho, \sigma\in\mathbb{Y}([T])$. 
\end{lemma}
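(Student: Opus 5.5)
Expand the product of normalized class indicators as an average over pairs:
\[
\frac{A_{\iota_{n,\lvert\rho\rvert}\rho}}{\lvert C_{\iota_{n,\lvert\rho\rvert}\rho}\rvert}\,
\frac{A_{\iota_{n,\lvert\sigma\rvert}\sigma}}{\lvert C_{\iota_{n,\lvert\sigma\rvert}\sigma}\rvert}
= \frac{1}{\lvert C_{\iota_{n,\lvert\rho\rvert}\rho}\rvert\,\lvert C_{\iota_{n,\lvert\sigma\rvert}\sigma}\rvert}
\sum_{r\geqq 0}\ \sum_{(g,h)\in S^{(n)}_{\rho,\sigma}(r)} gh .
\]
Apply $f^{(n)}$ and use that it is central, so $f^{(n)}(gh)$ depends only on the type of $gh$. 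The proof then splits according to $r$, the number of common support points.

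\textbf{The term $r=0$.} Here $\mathrm{supp}\,g\cap\mathrm{supp}\,h=\varnothing$, so $gh$ has type $\rho\sqcup\sigma$ and $f^{(n)}(gh)=f^{(n)}(\iota_{n,\lvert\rho\rvert+\lvert\sigma\rvert}(\rho\sqcup\sigma))$ exactly. The class sizes are $\lvert C_{\iota_{n,\lvert\rho\rvert}\rho}\rvert \asymp c_\rho n^{\lvert\rho\rvert}$, since one chooses an ordered support of $\lvert\rho\rvert$ points and then arranges the $T$-labels. The fraction of pairs with disjoint supports is
\[
\frac{\lvert S^{(n)}_{\rho,\sigma}(0)\rvert}{\lvert C_{\iota_{n,\lvert\rho\rvert}\rho}\rvert\,\lvert C_{\iota_{n,\lvert\sigma\rvert}\sigma}\rvert}
=\frac{(n-\lvert\rho\rvert)^{\downarrow\lvert\sigma\rvert}}{n^{\downarrow\lvert\sigma\rvert}} = 1+O(n^{-1}).
\]
Hence the $r=0$ contribution equals $f^{(n)}(\iota(\rho\sqcup\sigma))(1+O(n^{-1}))$. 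Using \eqref{eq:4-0-3} for the type $\rho\sqcup\sigma$, we get $f^{(n)}(\iota(\rho\sqcup\sigma)) = O(n^{-\frac12(\lvert\rho\rvert-l(\rho)+\lvert\sigma\rvert-l(\sigma))})$. So the $r=0$ contribution differs from $f^{(n)}(\iota(\rho\sqcup\sigma))$ by $O(n^{-\frac12(\ldots)-1})$, which is the claimed order.

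\textbf{The terms $r\geqq1$.} By \eqref{eq:4-2-3}, the weight of $S^{(n)}_{\rho,\sigma}(r)$ relative to the product of class sizes is $O(n^{-r})$. For each such pair, Lemma~\ref{lem:4-2-1} (the first one, \eqref{eq:4-2-4}) gives a lower bound on the reduced size of $\mathrm{type}(gh)$, up to the harmless removal of $e_T$ fixed points. Combining this with \eqref{eq:4-0-3}, there are only finitely many possible types of $gh$ (independent of $n$), so the bound is uniform and
\[
f^{(n)}(gh)=O\bigl(n^{-\frac12(\lvert\rho\rvert-l(\rho)+\lvert\sigma\rvert-l(\sigma))+r-1}\bigr).
\]
Multiplying by $O(n^{-r})$ gives $O(n^{-\frac12(\ldots)-1})$ for each $r$. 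Summing over the finitely many values $r\leqq\lvert\rho\rvert\wedge\lvert\sigma\rvert$ finishes the proof.

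\textbf{Main obstacle.} The delicate point is the exponent bookkeeping for $r\geqq1$. The lemma loses $2r-2$ in reduced length, which halves to $r-1$ in the exponent, while the counting gains $r$; the net gain is exactly one power of $n$. One must also check that \eqref{eq:4-0-3} applies to $\mathrm{type}(gh)$ as an element of $\mathbb{Y}([T])$. This works because trivial $e_T$ one-cycles contribute $0$ to $\lvert\cdot\rvert-l(\cdot)$, so discarding them does not change the bound.
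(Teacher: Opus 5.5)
Your proof is correct and follows the paper's argument. You decompose $C_{\iota_{n,\lvert\rho\rvert}\rho}\times C_{\iota_{n,\lvert\sigma\rvert}\sigma}$ into the sets $S^{(n)}_{\rho,\sigma}(r)$, treat $r=0$ exactly via the type $\rho\sqcup\sigma$, and bound the terms with $r\geqq 1$ by combining \eqref{eq:4-2-4}, \eqref{eq:4-0-3} and the count \eqref{eq:4-2-3}. You also spell out two points the paper leaves implicit when it passes from \eqref{eq:4-2-7} to \eqref{eq:4-2-5}: that $\lvert S^{(n)}_{\rho,\sigma}(0)\rvert$ divided by the product of class sizes is $1+O(n^{-1})$, and that the $O$-bounds are uniform because only finitely many types of $gh$ occur.
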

\textit{Proof} \ 
According to 
\[ 
C_{\iota_{n,\lvert\rho\rvert}\rho} \times C_{\iota_{n,\lvert\sigma\rvert}\sigma} 
= \bigsqcup_{r=0}^{\lvert\rho\rvert\wedge\lvert\sigma\rvert} 
S_{\rho, \sigma}^{(n)}(r), \qquad 
A_{\iota_{n,\lvert\rho\rvert}\rho} A_{\iota_{n,\lvert\sigma\rvert}\sigma} 
= \sum_{r=0}^{\lvert\rho\rvert\wedge\lvert\sigma\rvert}  
\sum_{(g, h)\in S_{\rho, \sigma}^{(n)}(r)} gh, 
\] 
and \eqref{eq:4-0-3}, \eqref{eq:4-2-4}, we have 
\begin{align}
&f^{(n)} \bigl( 
A_{\iota_{n,\lvert\rho\rvert}\rho} A_{\iota_{n,\lvert\sigma\rvert}\sigma} \bigr) 
= \sum_{S_{\rho, \sigma}^{(n)}(0)} f^{(n)}(gh) + 
\sum_{r=1}^{\lvert\rho\rvert\wedge\lvert\sigma\rvert} 
\sum_{S_{\rho, \sigma}^{(n)}(r)} f^{(n)}(gh) \notag \\ 
&= \lvert S_{\rho, \sigma}^{(n)}(0)\rvert 
f^{(n)}\bigl( \iota_{n, \lvert\rho\rvert +\lvert\sigma\rvert}(\rho\sqcup\sigma)\bigr) 
+ \sum_{r=1}^{\lvert\rho\rvert\wedge\lvert\sigma\rvert} 
\sum_{S_{\rho, \sigma}^{(n)}(r)} 
O \bigl( n^{-\frac{1}{2}(\lvert\rho\rvert-l(\rho)+\lvert\sigma\rvert- l(\sigma)-2r+2)}\bigr). 
\label{eq:4-2-7}
\end{align}
By \eqref{eq:4-2-3}, \eqref{eq:4-2-7} yields \eqref{eq:4-2-5}.
\hfill $\blacksquare$

\medskip

Now we show for any $t\geqq 0$, $k\in \mathbb{N}$ and $y, z\in T$ 
\begin{multline}\label{eq:4-2-8}
f_{tn}^{(n)} \bigl( \mathrm{E}[ J_{n+1}^{\ k} (y, (n+1))] 
\mathrm{E}[ J_{n+1}^{\ k} (z, (n+1))] \bigr) \\ 
- f_{tn}^{(n)}\bigl( \mathrm{E}[ J_{n+1}^{\ k} (y, (n+1))] \bigr) \, 
f_{tn}^{(n)}\bigl( \mathrm{E}[ J_{n+1}^{\ k} (z, (n+1))] \bigr) 
= o(n^k) \qquad (n\to\infty).
\end{multline}
Since \eqref{eq:4-0-3} holds by AFP (Proposition~\ref{prop:4-4}) and \eqref{eq:4-1-20}, 
Lemma~\ref{lem:4-2-1} is applied. 
Theorem~\ref{th:1-2} gives us 
\begin{align}
&\mathrm{E}[ J_{n+1}^{\ k} (y, (n+1))] \mathrm{E}[ J_{n+1}^{\ k} (z, (n+1))] 
\notag \\ 
&= \sum_{\sigma, \tau\in \overline{\mathbb{Y}}_k} 
\sum_{\substack{(m_1^x, m_2^x, \cdots)_{x\in T^\circ}: \, (\star)_y \\ 
(n_1^x, n_2^x, \cdots)_{x\in T^\circ}: \, (\star)_z}} 
\lvert\mathrm{NC}(\sigma)\rvert \lvert\mathrm{NC}(\tau)\rvert 
n^{2k-l(\sigma)-l(\tau)} \bigl( 1+ O\bigl(\frac{1}{n}\bigr)\bigr) 
\lvert T\rvert^{2k-l(\sigma)-l(\tau)}
\notag \\ 
&\qquad\binom{m_1(\sigma^\times)}{(m_1^x)_{x\in T}} 
\binom{m_2(\sigma^\times)}{(m_2^x)_{x\in T}} \cdots 
\binom{n_1(\tau^\times)}{(n_1^x)_{x\in T}} 
\binom{n_2(\tau^\times)}{(n_2^x)_{x\in T}} \cdots 
\notag \\ 
&\qquad\frac{A_{\iota_{n, \lvert\sigma^\times\rvert}((1^{m_1^x}2^{m_2^x}\cdots))_{x\in T}}}
{\lvert C_{\iota_{n, \lvert\sigma^\times\rvert}((1^{m_1^x}2^{m_2^x}\cdots))_{x\in T}}\rvert}
\frac{A_{\iota_{n, \lvert\tau^\times\rvert}((1^{n_1^x}2^{n_2^x}\cdots))_{x\in T}}}
{\lvert C_{\iota_{n, \lvert\tau^\times\rvert}((1^{n_1^x}2^{n_2^x}\cdots))_{x\in T}}\rvert} 
\qquad + \ <\text{remainder terms}> 
\label{eq:4-2-9}
\end{align}
where $m_1^{e_T} = m_1(\sigma^\times) - \sum_{x\in T^\circ} m_1^x$, 
$n_1^{e_T} = m_1(\tau^\times) - \sum_{x\in T^\circ} n_1^x$. 
In \eqref{eq:4-2-9}, we have 
\begin{equation}\label{eq:4-2-10}
f_{tn}^{(n)} (<\text{remainder terms}>) = O(n^{k-1}) \quad (n\to\infty).
\end{equation}
In fact, \eqref{eq:4-0-3} and \eqref{eq:1-20} yield 
\begin{align*}
&\sum_{g\in N_\sigma} f_{tn}^{(n)}(g) = 
O\bigl( n^{-\frac{\lvert\sigma^\times\rvert -l(\sigma^\times)}{2}}\bigr) 
O(n^{k-l(\sigma)-1}) = O(n^{\frac{k}{2}-1}), \\ 
&n^{k-l(\sigma)} f_{tn}^{(n)} \bigl( 
\iota_{n, \lvert\sigma^\times\rvert}((1^{m_1^x}2^{m_2^x}\cdots))_{x\in T}\bigr) 
= n^{k-l(\sigma)} 
O\bigl( n^{-\frac{\lvert\sigma^\times\rvert -l(\sigma^\times)}{2}}\bigr) 
= O(n^{\frac{k}{2}}), 
\end{align*}
which imply \eqref{eq:4-2-10}. 
Noting 
\[ 
-\frac{1}{2}(\lvert\sigma^\times\rvert- l(\sigma^\times) +
\lvert\tau^\times\rvert -l(\tau^\times)) -1 
= -k +l(\sigma)+l(\tau) -1, 
\] 
we have from Lemma~\ref{lem:4-2-1} 
\begin{multline}\label{eq:4-2-14}
f_{tn}^{(n)} \Bigl( 
\frac{A_{\iota_{n, \lvert\sigma^\times\rvert}((1^{m_1^x}2^{m_2^x}\cdots))_{x\in T}}}
{\lvert 
C_{\iota_{n, \lvert\sigma^\times\rvert}((1^{m_1^x}2^{m_2^x}\cdots))_{x\in T}}\rvert} \, 
\frac{A_{\iota_{n, \lvert\tau^\times\rvert}((1^{n_1^x}2^{n_2^x}\cdots))_{x\in T}}}
{\lvert C_{\iota_{n, \lvert\tau^\times\rvert}((1^{n_1^x}2^{n_2^x}\cdots))_{x\in T}}
\rvert} \Bigr) \\ 
= f_{tn}^{(n)} \bigl( \iota_{n, \lvert\sigma^\times\rvert+\lvert\tau^\times\rvert}
((1^{m_1^x+n_1^x}2^{m_2^x+n_2^x}\cdots))_{x\in T}\bigr) 
+ O(n^{-k +l(\sigma)+l(\tau) -1}).
\end{multline}
Furthermore, AFP yields 
\begin{multline}\label{eq:4-2-15}
 f_{tn}^{(n)} \bigl( \iota_{n, \lvert\sigma^\times\rvert+\lvert\tau^\times\rvert}
((1^{m_1^x+n_1^x}2^{m_2^x+n_2^x}\cdots))_{x\in T}\bigr) \\
 =  f_{tn}^{(n)} \bigl( \iota_{n, \lvert\sigma^\times\rvert}
 ((1^{m_1^x}2^{m_2^x}\cdots))_{x\in T}\bigr) \, 
 f_{tn}^{(n)} \bigl( \iota_{n, \lvert\tau^\times\rvert}
 ((1^{n_1^x}2^{n_2^x}\cdots))_{x\in T}\bigr) 
 + o(n^{-k +l(\sigma)+l(\tau)}). 
\end{multline}
Combining \eqref{eq:4-2-14}, \eqref{eq:4-2-15} and \eqref{eq:4-2-10} with 
\eqref{eq:4-2-9}, we have 
\begin{multline*}
f_{tn}^{(n)} \bigl( \mathrm{E}[ J_{n+1}^{\ k} (y, (n+1))] 
\mathrm{E}[ J_{n+1}^{\ k} (z, (n+1))] \bigr) \\ 
= f_{tn}^{(n)}\bigl( \mathrm{E}[ J_{n+1}^{\ k} (y, (n+1))] \bigr) \, 
f_{tn}^{(n)}\bigl( \mathrm{E}[ J_{n+1}^{\ k} (z, (n+1))] \bigr) 
+ o(n^k) + O(n^{k-1}).
\end{multline*}
This completes the proof of \eqref{eq:4-2-8}. 

\subsubsection{Final step of Proof of Theorem~\ref{th:1-3}}
We show $L^2$-convergence of $M_k(\mathfrak{m}_{(\lambda^\zeta)^{\sqrt{n}}})$ 
for any $\zeta\in\widehat{T}$ and $k\in\mathbb{N}$. 
In order to see the convergence of \eqref{eq:4-1-23} is in $L^2$, let $C_k^y(t)$ 
denote the RHS of \eqref{eq:4-1-23}. 
Then, \eqref{eq:4-2-8} yields 
\begin{equation}\label{eq:4-2-17}
\lim_{n\to\infty} f_{tn}^{(n)} \bigl( ( n^{-\frac{k}{2}} 
\mathrm{E}[ J_{n+1}^{\ k} (y, (n+1))] - C_k^y(t)) 
( n^{-\frac{k}{2}} \mathrm{E}[ J_{n+1}^{\ k} (z, (n+1))] - C_k^z(t)) \bigr) =0.
\end{equation}
The RHS of \eqref{eq:4-1-24} is equal to 
\[ 
\frac{1}{\lvert T\rvert^k} \sum_{y\in T} C_k^y(t) \overline{\zeta(y)} 
=: K_k^\zeta(t).
\] 
Since a normalized irreducible character is multiplicative on the center, we have 
\begin{equation}\label{eq:4-2-19}
\mathbb{E}_{M_{tn}^{(n)}} \bigl[ (n^{-\frac{k}{2}} M_k( \mathfrak{m}_{\lambda^\zeta})
- K_k^\zeta(t))^2 \bigr] = 
f_{tn}^{(n)} \Bigl( \bigl\{ \sum_{y\in T} ( n^{-\frac{k}{2}} \mathrm{E} [ 
J_{n+1}^{\ k} (y, (n+1))] - C_k^y(t)) \overline{\zeta(y)} \bigr\}^2 \Bigr).
\end{equation}
Using the inequality for tracial state $\phi$ of $\ast$-algebra $\mathcal{A}$: 
\[ 
\phi\Bigl( \bigl( \sum_i a_i\bigr)^2 \Bigr) \leqq 
\Bigl( \sum_i \phi (a_i^\ast a_i)^{\frac{1}{2}} \Bigr)^2, \qquad a_i\in \mathcal{A}
\] 
and 
\[ 
\mathrm{E} [ J_{n+1}^{\ k} (y, (n+1))]^\ast = 
\mathrm{E} [ J_{n+1}^{\ k} (y^{-1}, (n+1))], \qquad 
\overline{C_k^y(t)} = C_k^{y^{-1}}(t), 
\] 
we continue \eqref{eq:4-2-19} as 
\[ 
\leqq \Bigl\{ \sum_{y\in T} f_{tn}^{(n)} \bigl( 
(n^{-\frac{k}{2}} \mathrm{E}[ J_{n+1}^{\ k} (y^{-1}, (n+1))] - C_k^{y^{-1}}(t)) 
(n^{-\frac{k}{2}} \mathrm{E}[ J_{n+1}^{\ k} (y, (n+1))] - C_k^y(t)) 
\bigr)^{\frac{1}{2}}\Bigr\}^2, 
\] 
which tends to $0$ as $n\to\infty$ by \eqref{eq:4-2-17}. 
We have thus obtained the $L^2$-convergence 
\begin{equation}\label{eq:4-2-23}
\lim_{n\to\infty} \mathbb{E}_{M_{tn}^{(n)}}\bigl[ \bigl( 
M_k(\mathfrak{m}_{(\lambda^\zeta)^{\sqrt{n}}}) - 
M_k(\mathfrak{m}_{\omega(t)^\zeta}) \bigr)^2 \bigr] =0
\end{equation}
where $\omega(t)^\zeta$ is the one in \eqref{eq:4-1-27}. 
In order to deduce \eqref{eq:1-34} from \eqref{eq:4-2-23}, we check the topology 
on $\mathscr{D}$, the continuous diagrams. 
The set $\mathscr{P}$ of probabilities on $\mathbb{R}$ with weak convergence topology 
is homeomorphic to $\mathscr{D}$ with uniform topology. 
Set 
\[ 
\mathscr{P}_0 = \{ \mathfrak{m}\in \mathscr{P} \,\vert\, \mathfrak{m} 
\text{ has all moments and its moment problem is determinate} \}.
\] 
Let $\mathscr{D}_0$ ($\subset \mathscr{D}$) be the image of $\mathscr{P}_0$. 
We equip $\mathscr{D}_0$ with the moment topology, which is metrizable topology 
defined by the countable family of semi-distances 
\[ 
\lvert M_k(\mathfrak{m}_{\varphi_1}) - M_k(\mathfrak{m}_{\varphi_2})\rvert, 
\qquad \varphi_1, \varphi_2 \in\mathscr{D}_0.
\] 
The moment topology is stronger than (or equal to) the uniform topology on $\mathscr{D}_0$. 
See \cite[Proposition~3.4]{Hor16} for reference. 
Namely, for $\varphi\in\mathscr{D}_0$, we have 
$\forall\varepsilon >0$, $\exists\delta >0$, $k_1, \cdots, k_p\in \mathbb{N}$ such that, 
$\forall\phi\in\mathscr{D}_0$, 
\[ 
\max_{i\in\{1,\dots, p\}} 
\lvert M_{k_i}(\mathfrak{m}_\phi) - M_{k_i}(\mathfrak{m}_\varphi) \rvert 
\leqq \delta \ \Longrightarrow \ \lVert \phi- \varphi \rVert_{\sup} \leqq \varepsilon.
\] 
In particular, taking $\phi = \nu^{\sqrt{n}}$, $\nu\in\mathbb{Y}\subset\mathscr{D}_0$, 
we have 
\begin{equation}\label{eq:4-2-27}
\bigl\{ \nu\in\mathbb{Y} \,\big\vert\, \lVert \nu^{\sqrt{n}} -\varphi\rVert_{\sup} 
> \varepsilon \bigr\} \subset 
\bigl\{ \nu\in\mathbb{Y} \,\big\vert\, \max_{i\in\{1,\dots, p\}} 
\lvert M_{k_i}(\mathfrak{m}_{\nu^{\sqrt{n}}}) - M_{k_i}(\mathfrak{m}_\varphi) \rvert 
> \delta \bigr\}.
\end{equation}
Let $\varphi$ be $\omega(t)^\zeta$ in \eqref{eq:4-2-23}. 
Then, \eqref{eq:4-2-27} yields 
\begin{multline}\label{eq:4-2-28}
\bigl\{ \lambda = (\lambda^\zeta)_{\zeta\in\widehat{T}} \in\mathbb{Y}_n(\widehat{T}) 
\,\big\vert\, \max_{\zeta\in\widehat{T}} \lVert (\lambda^\zeta)^{\sqrt{n}} 
- \omega(t)^\zeta \rVert_{\sup} > \varepsilon \bigr\} \\ 
\subset \bigcup_{\zeta\in\widehat{T}} \bigcup_{i=1}^p 
\bigl\{ \lambda \in\mathbb{Y}_n(\widehat{T}) 
\,\big\vert\, \lvert M_{k_i}(\mathfrak{m}_{(\lambda^\zeta)^{\sqrt{n}}}) - 
M_{k_i}(\mathfrak{m}_{\omega(t)^\zeta}) \rvert > \delta \bigr\}.
\end{multline}
Taking $M_{tn}^{(n)}$-values at \eqref{eq:4-2-28} and using \eqref{eq:4-2-23}, 
we obtain \eqref{eq:1-34}.

\subsection{Typical examples} 
As is stated before the definition of AFP, an extremal case of AFP is produced by character 
$f$ of $\mathfrak{S}_\infty (T)$, i.e. an extremal normalized positive-definite central 
function, through $f^{(n)} = f\vert_{\mathfrak{S}_n(T)}$. 
The characters of $\mathfrak{S}_\infty (T)$ for finite $T$ were classified in \cite{Boy05} 
and \cite{HiHi05} while we refer to \cite{HiHi07} and \cite{HiHiHo09} for general compact $T$. 
Since the limit shape of Young diagrams was first considered for the Plancherel measure, 
which corresponds to $f =\delta_e$, it is natural to discuss ensembles coming from 
characters of $\mathfrak{S}_\infty (T)$. 
The set of characters of $\mathfrak{S}_\infty (T)$ is parametrized by an extension of 
the Thoma simplex: 
\begin{align*}
\Delta = \Bigl\{ &(\alpha, \beta, c) = 
\bigl( (\alpha^\zeta_i)_{\zeta\in\widehat{T}, i\in\mathbb{N}}, 
(\beta^\zeta_i)_{\zeta\in\widehat{T}, i\in\mathbb{N}}, 
(c^\zeta)_{\zeta\in\widehat{T}} \bigr) \,\Big\vert\, \\ 
&\alpha^\zeta_1\geqq\alpha^\zeta_2\geqq\cdots\geqq 0, \ 
\beta^\zeta_1\geqq\beta^\zeta_2\geqq\cdots\geqq 0, \ c^\zeta \geqq 0, \ 
\sum_{i=1}^\infty (\alpha^\zeta_i +\beta^\zeta_i) \leqq c^\zeta, \ 
\sum_{\zeta\in\widehat{T}} c^\zeta =1 \Bigr\}. 
\end{align*}
To $\delta_e$ corresponds the parameters 
\[ 
\alpha^\zeta_i = \beta^\zeta_i =0, \qquad c^\zeta = (\dim\zeta)^2 /\lvert T\rvert.
\] 
The $\alpha$ and $\beta$ parameters are expressed as limits of ratios of row and column 
lengths divided by sizes of Young diagrams. 
Since we construct instead ensembles in which limit shapes survive, we are led to consider a 
regime of parameters satisfying 
\[ 
\alpha^\zeta_i = \alpha^\zeta_i(n) = O\bigl( \frac{1}{\sqrt{n}}\bigr), \ 
\beta^\zeta_i = \beta^\zeta_i(n) = O\bigl( \frac{1}{\sqrt{n}}\bigr), \ 
c^\zeta = c^\zeta(n) = O(1) \quad (n\to\infty)
\] 
and set 
\[ 
f^{(n)} = f_{\alpha(n), \beta(n), c(n)}\vert_{\mathfrak{S}_n(T)}, \qquad 
n\in\mathbb{N}.
\] 
Concrete computations for these ensembles are given in \cite[Section~5]{Hor25}. 
We omit details here to avoid duplication. 
In \cite{Hor25} we treated general averaged limit shapes. 
Part of the examples there serve as examples for concentrated limit shapes in this paper. 
Those include the case of abelian $T$ and the items in \cite[Table~1]{Hor25} for which 
pausing time is referred to as \lq\lq exponential-like\rq\rq.


\end{document}